\title{Using Sinkhorn in the JKO scheme adds linear diffusion}
\author{Aymeric \textsc{Baradat}$^*$, \and Anastasiia \textsc{Hraivoronska}$^*$, \and Filippo \textsc{Santambrogio}\thanks{Universite Claude Bernard Lyon 1, CNRS, Centrale Lyon, INSA Lyon, Universit\'e Jean Monnet, ICJ UMR5208, 43 bd du 11 Noembre 1918, 69622
Villeurbanne, France, {\tt $\{$baradat,hraivoronska,santambrogio$\}$@math.univ-lyon1.fr}}
}
\date{\today}
\theoremstyle{plain}
\newtheorem{Thm}{Theorem}[section]
\newtheorem{Cor}[Thm]{Corollary}
\newtheorem{Prop}[Thm]{Proposition}
\newtheorem{Lem}[Thm]{Lemma}
\theoremstyle{definition}
\newtheorem{Def}[Thm]{Definition}
\newtheorem{Ass}[Thm]{Assumption}
\theoremstyle{remark}
\newtheorem{Rem}[Thm]{Remark}
\DeclareSymbolFont{stixletters}{LS1}{stix}{m}{it}
\DeclareMathAccent{\cev}{\mathord}{stixletters}{"91}
\DeclareMathAccent{\vec}{\mathord}{stixletters}{"92}
\DeclareMathAccent{\vecev}{\mathord}{stixletters}{"95}
\newcommand{\R}{\mathbb{R}}
\newcommand{\Z}{\mathbb{Z}}
\newcommand{\T}{\mathbb{T}}
\newcommand{\N}{\mathbb{N}}
\newcommand{\cg}{\langle}
\newcommand{\cd}{\rangle}
\newcommand{\1}{\mathds{1}}
\newcommand{\eps}{\varepsilon}
\renewcommand{\P}{\mathcal P}
\DeclareMathOperator{\Leb}{Leb}
\DeclareMathOperator{\Div}{div}
\DeclareMathOperator{\bDiv}{\mathbf{div}}
\DeclareMathOperator{\tr}{tr}
\DeclareMathOperator{\D}{d \!}
\DeclareMathOperator{\DD}{D^2\!}
\DeclareMathOperator*{\argmin}{arg\,min}
\begin{document} 
	\maketitle
	\begin{abstract}
		The JKO scheme is a time-discrete scheme of implicit Euler type that allows to construct weak solutions of evolution PDEs which have a Wasserstein gradient structure. The purpose of this work is to study the effect of replacing the classical quadratic optimal transport problem by the Schr\"odinger problem (\emph{a.k.a.}\ the entropic regularization of optimal transport, efficiently computed by the Sinkhorn algorithm) at each step of this scheme. We find that if $\eps$ is the regularization parameter of the Schr\"odinger problem, and $\tau$ is the time step parameter, considering the limit $\tau,\eps \to 0$ with $\frac{\eps}{\tau} \to \alpha \in \R_+$ results in adding the term $\frac{\alpha}{2} \Delta \rho$ on the right-hand side of the limiting PDE. In the case $\alpha = 0$ we improve a previous result by Carlier, Duval, Peyré and Schmitzer (2017).
	\end{abstract}

	\section{Introduction}
	
	\paragraph*{The JKO scheme.} The Jordan-Kinderlehrer-Otto scheme ({\it a.k.a.}\ JKO scheme), introduced for the first time in~\cite{jordan1998variational} for linear diffusion equations (Fokker-Planck), then applied in \cite{Otto31012001} to the case of non-linear diffusions, has been studied in a wide variety of contexts since then. We refer to \cite{amb06} and \cite{santambrogio2017overview} for a more detailed presentation. We can say that the JKO scheme is one of the most beautiful applications of optimal transport, as it allows us to solve Cauchy problems -- and hence to predict later states of a system -- using as a brick the Monge-Kantorovich problem (which involves the optimal interpolation between the configurations at two fixed times). 
	
	The idea is that if $\Omega$ is a bounded domain in the Euclidean space or a Riemannian manifold (say for simplicity compact, without boundaries), optimal transport is a way to endow the space of probability measures $\P(\Omega)$ with a formal Riemannian structure. Therefore, given an energy functional $\mathcal F : \P(\Omega) \to \R$, it is possible to wonder what is the formal \emph{gradient flow} of $\mathcal F$ in the space of probability measures. Formally, this gradient flow, that we could write:
	\begin{equation*}
	\dot \rho = - \nabla_{\mathcal P(\Omega)} \mathcal F (\rho),
	\end{equation*}
	 takes the form of the PDE
	\begin{equation}
	\label{eq:gradient_flow}
	\partial_t \rho - \Div \left( \rho \nabla \left(\frac{\delta \mathcal F}{\delta \rho}[\rho]\right) \right) = 0,
	\end{equation}
	where $\frac{\delta \mathcal F}{\delta \rho}[\rho]$ is the first variation of $\mathcal F$ that is the function implicitly defined by the formal condition
	\begin{equation*}
	\frac{\D }{\D s}\mathcal F(\rho + s \varphi)\Big|_{s=0} = \int_\Omega \frac{\delta \mathcal F}{\delta \rho}[\rho](x) \varphi(x) \D x, \qquad \forall \varphi \in \mathcal C(\Omega) \mbox{ with } \int \varphi(x)\D x = 0.
	\end{equation*} 
	As a consequence, many well-known PDEs, such as for instance the heat equation, are formally gradient flows of well-chosen energy functionals (the Boltzmann entropy in the latter case). Therefore, a very natural idea consists in trying to solve these so-called \emph{Wasserstein gradient flows} using an implicit Euler scheme. 
	
	In this context, once $\Omega$ and $\mathcal F$ are chosen, given a time step parameter $\tau>0$ and an initial condition $\rho_0\in \P(\Omega)$, the JKO scheme is:
	\begin{equation}
	\label{eq:JKO}
	\left\{ 
	\begin{gathered}
	\rho_0^\tau \coloneq  \rho_0,\\
	\forall n \in \N, \, \rho^\tau_{n+1} \in \argmin \left\{ \frac{D(\rho^\tau_n, \rho)^2}{2\tau} + \mathcal F(\rho) \, \bigg| \, \rho \in \P(\Omega) \right\},
	\end{gathered}
	\right.
	\end{equation}
	where $D$ stands for the $2$-Wasserstein distance, that we will define below at~Definition~\ref{def:MK}. The goal of the theory is hence to prove that under some assumptions on $\mathcal F$, if $(\rho^\tau_n)_{n \in \N}$ satisfies the induction relation~\eqref{eq:JKO}, the family of piecewise constant curves defined by
	\begin{equation*}
		\bar \rho^\tau(t) \coloneq  \rho^\tau_{\lceil t/\tau \rceil}
	\end{equation*}
	is compact in the set of curves valued in $\mathcal P(\Omega)$ as $\tau \to 0$ (i.e. there exists $\tau_k\to 0$ such that the corresponding curves uniformly converge - for the distance $D$ - to a limit curve), and that the corresponding limit points are weak solution of the PDE~\eqref{eq:gradient_flow}. Once again, we refer to~\cite{santambrogio2017overview} for a review of the available assumptions and techniques allowing to prove such results.

	\paragraph*{Our contribution.} In this work, we study the effect of replacing the usual Wasserstein distance in this scheme by its so-called \emph{entropic regularization}, \emph{a.k.a.}\ \emph{Schr\"odinger cost} studied for instance in~\cite{leonard2012schrodinger,leonard2013survey}. This quantity is labeled by a positive parameter $\eps$, and we denote it by~$D_\eps$. We will define it properly below at Definition~\ref{def:Sch}, but let us already mention that it is obtained from the Wasserstein distance either by adding to a transport cost a penalization of the form $\eps$ times the Boltzmann entropy of the transport plan (see formula~\eqref{def:D_eps_kanto} for more precisions), or equivalently by adding a diffusive term of the form $\frac{\eps}{2} \Delta \rho$ in the right-hand side of the continuity equation in the Benamou-Brenier formulation of optimal transport (which is the formulation that we will use). As we will explain in Subsection~\ref{subsec:sinkhorn}, it is a good approximation of the Wasserstein distance since $D_\eps \to D$ as $\eps \to 0$, and it is very useful for numerics since it can be computed using the so-called \emph{Sinkhorn} algorithm. 
	
	Our main result, precisely stated in Theorem~\ref{thm:main}, can be described as follows. For given $\tau,\eps >0$, let $(\rho^{\tau,\eps}_n)_{n \in \N}$ satisfy the following iterative relations (the \emph{entropic} JKO scheme):
		\begin{equation}
	\label{eq:entropic_JKO}
	\left\{ 
	\begin{gathered}
	\rho_0^{\tau,\eps} \coloneq  \rho_0,\\
	\forall n \in \N, \, \rho^{\tau,\eps}_{n+1} \in \argmin \left\{ \frac{D_{\eps}(\rho^{\tau,\eps}_n, \rho)^2}{2\tau} + \mathcal F(\rho) \, \bigg| \, \rho \in \P(\Omega) \right\}.
	\end{gathered}
	\right.
	\end{equation}
	Then, let us choose $(\tau_k)$ and $(\eps_k)$ two sequences of positive numbers converging to $0$, and such that the ratio $\eps_k / \tau_k$ converges to some $\alpha \in \R_+$ as $k \to + \infty$ (note that we allow $\alpha$ to vanish). Then, under rather usual assumptions for $\mathcal F$ that will be detailed in Subsection~\ref{subsec:ass}, the family of piecewise constant curves 
	\begin{equation*}
	\bar \rho^k(t) \coloneq  \rho^{\tau_k,\eps_k}_{\lceil t/\tau_k \rceil}
	\end{equation*}
	is compact in the set of continuous curves valued in $\mathcal P(\Omega)$ as $k \to + \infty$, and the corresponding limit points are weak solutions of the PDE
	\begin{equation}
    \label{eq:regularized_GF}
	\partial_t \rho - \Div \left( \rho \nabla \left(\frac{\delta \mathcal F}{\delta \rho}[\rho]\right) \right) = \frac{\alpha}{2} \Delta \rho.
	\end{equation}
	
	In words, we identify the correct regime between $\eps$ and $\tau$ to see the effect of the regularization at the limit $\tau \to 0$, and we show that this effect is the addition of linear diffusion in the limiting PDE.
	
	\bigskip
    
    Using $D_\eps$ instead of $D$ as a way of approximating the JKO scheme is not new. It was, for instance, done in \cite{peyre2015entropic}, with convergence results proven in \cite{carlier2017convergence}. But in all the references we know so far, the entropic regularization was just considered as a technical, computational tool to improve the numerical feasibility, and the idea was that $\eps$ had to be very small. This also impacted the quality of the numerical results, as the Sinkhorn algorithm is much less stable and converges slowly for small $\eps$. We remark that even though the case $\eps/\tau\to 0$ is not the main focus of this paper, our result covers this case and holds without the technical assumption $\varepsilon |\log \varepsilon| \leq C \tau^2$ that was present in \cite{carlier2017convergence}.
    
    What we prove here is that the entropic approximation in the term $D_\eps$ provides linear diffusion in the limit. An early result studying this connection appeared in \cite{adams2011large} in the quite restrictive setting corresponding in our language to $\mathcal F = 0$, in one dimension. The validity of the same result in more general setting was somehow conjectured because adding to the functional of the gradient flow a Boltzmann entropy means adding linear diffusion to the PDE, but the delicate point is that the Kantorovich formulation of $D_\eps$ involves the entropy of the \emph{transport plan}, and not of its marginals, see formula~\eqref{def:D_eps_kanto}. 
		
    We also note that some recent results exist about the use of $D_\eps$ in gradient flows when $\eps$ is not small at all: this is what is done in \cite{Hugo-1} and \cite{Hugo-2}, using the so-called {\it debiased Sinkhorn divergence}, which consists in defining a distance using 
        \begin{equation*}
        (\mu,\nu) \in \P(\Omega) \times \P(\Omega) \mapsto D_\eps^2(\mu,\nu)-\frac 12 D_\eps^2(\mu,\mu)-\frac 12 D_\eps^2(\nu,\nu),
        \end{equation*}
        a quantity which contrarily to $D_\eps$ vanishes if $\mu=\nu$ and which is a convex, smooth, positive definite loss function that metrizes the weak convergence of probability measures on a compact set, see \cite{Ramdas2017,Genevay2018,Feydy2018}. However, the spirit of \cite{Hugo-1} and \cite{Hugo-2} is quite different from what we do in this paper, as the goal in \cite{Hugo-2} is to obtain a new PDE, corresponding to the differential structure of the space of probability measures endowed with the geometry of debiased Sinkhorn divergence studied in \cite{Hugo-1}, for fixed $\eps>0$.

\paragraph{Context and notations.}
In this article, we will always work in the flat torus, that is, $\Omega = \T^d$. Here are a few notations that we will constantly use.
\begin{itemize}
	\item We denote by $\Leb$ the Lebesgue measure on $\T^d$, normalized so that 
	$\Leb(\T^d) = 1$.
	\item 	We call $(\sigma_t)_t \geq 0$ the heat kernel on the torus, that is, the solution of
	\begin{equation*}
	\left\{  
	\begin{aligned}
	\partial_t \sigma_t &= \frac{1}{2} \Delta \sigma_t,\\
	\sigma_0 &= \delta_0.
	\end{aligned}
	\right.
	\end{equation*}
	\item Whenever we consider a function $f = f(t,x)$ depending on time and space, $f(t)$ stands for the map $x \mapsto f(t,x)$ depending on space only.
\end{itemize}

\paragraph{Outline of the paper.} 
	In Section~\ref{sec:main_result}, we define properly the Wasserstein distance and the Schr\"odinger cost, explain the link between the two and the role of the Sinkhorn algorithm to compute the second, present the assumptions we use and state the main result of this work. In Section~\ref{sec:sch}, we provide the properties of the Schr\"odinger cost that we need in the proof of Theorem~\ref{thm:main}. Most of the results included in this section are classical, and we will often only provide references to the relevant literature for proofs, but we also include some formal proofs to ensure the text remains self-contained and accessible. In Section~\ref{sec:proof}, we detail the proof of our main result, Theorem~\ref{thm:main}.

\section{Key notions and main result}
	\label{sec:main_result}
	\subsection{The Wasserstein distance and its entropic regularization}
	
	We have not yet described what is the Wasserstein distance we use. There are several ways to define it, but in the present article, we will mainly work in so-called \emph{Benamou-Brenier} formulations~\cite{ben00}. According to this choice, the Wasserstein distance is defined as follows. See~\cite[Section~4.1]{figalli2021invitation} for a more detailed presentation.
	\begin{Def}
		\label{def:MK}
		Let $\mu,\nu \in \P(\T^d)$. The Wasserstein distance between $\mu$ and $\nu$ is defined by
		\begin{equation*}
		\frac{D(\mu,\nu)^2}{2}\coloneq  \inf \left\{ \int_0^1\hspace{-5pt} \int \frac{|v|^2}{2}\rho \D t \, \bigg| \, \begin{gathered}
		\partial_t \rho + \Div(\rho v) = 0, \\  \rho_0 = \mu, \, \rho_1 = \nu  
		\end{gathered}\right\}.
		\end{equation*}
	\end{Def}
	\begin{Rem}
		  This quantity is indeed a distance, and it will actually be our reference distance for $\P(\T^d)$. Namely, when dealing with $\P(\T^d)$, we always consider it as a metric space endowed with $D$. Note that the topology induced by $D$ is the one of weak convergence (that is, in duality with the set of continuous functions).
		
            A rescaling in time in the continuity equation ensures the following equivalent formula which depends on a parameter $\tau>0$ and which is often useful when studying the JKO scheme:
			\begin{equation*}
			\frac{D(\mu,\nu)^2}{2\tau}= \inf \left\{ \int_0^\tau\hspace{-5pt} \int \frac{|v|^2}{2}\rho \D t \, \bigg| \, \begin{gathered}
			\partial_t \rho + \Div(\rho v) = 0, \\  \rho_0 = \mu, \, \rho_\tau = \nu  
			\end{gathered} \right\}.
			\end{equation*}
	\end{Rem}
	
	In this work, our goal is to study the JKO scheme when we replace this Wasserstein distance by an entropic regularized version of it, implying the Schr\"odinger problem instead of the quadratic optimal transport problem. In Benamou-Brenier formulation, this entropic regularization is defined as follows.
	\begin{Def}
		\label{def:Sch}
		Let $\mu,\nu \in \P(\T^d)$ and $\eps>0$. The Schr\"odinger cost of parameter $\eps$ between $\mu$ and~$\nu$ is
		\begin{equation}
		\label{eq:def_Sch}
		\frac{D_\eps(\mu,\nu)^2}{2}\coloneq  \inf \left\{\eps H(\mu) + \int_0^1\hspace{-5pt} \int \frac{|v|^2}{2}\rho \D t \, \bigg| \, \begin{gathered}
		\partial_t \rho + \Div(\rho v) = \frac{\eps}{2} \Delta \rho, \\  \rho_0 = \mu, \, \rho_1 = \nu  
		\end{gathered} \right\},
		\end{equation}
		where $H : \P(\T^d) \to [0,+ \infty]$ is the Boltzmann entropy, defined via 
		\begin{equation*}
		H(\mu)\coloneq \left\{ \begin{aligned}
		&\int_{\T^d} \log\left(\mu(x)\right) \mu(x) \D x && \mbox{if }\mu \ll \Leb, \\
		&+\infty && \mbox{otherwise},
		\end{aligned}\right.
		\end{equation*}
		where we identified measures with their densities with respect to $\Leb$. As $\Leb(\T^d) = 1$, this quantity is always nonnegative.
	\end{Def}
	\begin{Rem}
		  We call the quantity $D_\eps$ a cost and not a distance because it is not a distance. For instance, $D_\eps(\mu,\mu) \neq 0$, unless $\mu = \Leb$. On the other hand, the Schr\"odinger cost is symmetric as we will recall in Corollary~\ref{cor:sym}. Obtaining this convenient symmetry is the reason for the term $\eps H(\mu)$ in~\eqref{eq:def_Sch}.
		
            There are several ways to define the Schr\"odinger cost between $\mu$ and $\nu$, even once chosen to work in Benamou-Brenier formulations. In our work, the formulation above is the most useful one, at least for the formal computations. Yet, we will recall in Proposition~\ref{prop:equivalence} below other equivalent formulations.
		
            Once again, a time rescaling provides the following equivalent formulation for the Schr\"odinger cost:
			\begin{equation}
			\label{eq:rescaled_sch}
			\frac{D_\eps(\mu,\nu)^2}{2\tau}= \inf \left\{\frac{\eps}{\tau} H(\mu) +  \int_0^\tau\hspace{-5pt} \int \frac{|v|^2}{2}\rho \D t \, \bigg| \, \begin{gathered}
			\partial_t \rho + \Div(\rho v) = \frac{\eps}{2\tau} \Delta \rho, \\  \rho_0 = \mu, \, \rho_\tau = \nu  
			\end{gathered}\right\}.
			\end{equation}
                We can already see why formulation~\eqref{eq:def_Sch} is so practical for proving our result: the linear diffusion added in the right hand side of equation~\eqref{eq:regularized_GF} is precisely the one added to the continuity equation in~\eqref{eq:rescaled_sch}.

	\end{Rem}
	
\subsection{Static and dynamic entropic regularization of optimal transport problems}
		
		\label{subsec:sinkhorn}
		
		The goal of this section is to explain the interest in using the Schr\"odinger cost in place of the Wasserstein distance. The main interest is indeed computational.
		
		First of all, as observed in \cite{gentil2017analogy} (see also \cite{gigli2019benamou} for a more abstract result), the dynamical formulation of the Schr\"odinger problem and the associated cost $D_\eps$ is the Benamou-Brenier version of the problem of minimizing the relative entropy of the law of a stochastic process with given initial and final marginals $\mu$ and $\nu$ with respect to the law of the \emph{reversible} Brownian motion, that is, whose initial law is the Lebesgue measure. In other terms, following the celebrated Sanov theorem~\cite{sanov1958probability}, the Schr\"odinger problem can be interpreted as a large deviation problem~\cite{leonard2013survey}. 
		
		Also, a short reasoning involving the additivity of the logarithm implies that the latter problem can be reformulated in terms of a ``static'' entropy minimization. Let us consider all probability measures $\gamma\in\P(\Omega\times\Omega)$ whose marginal measures are given by $(\pi_x)_\#\gamma=\mu$ and $(\pi_y)_\#\gamma=\nu$. These probability measures on the product space are called {\it transport plans} in the Monge-Kantorovich theory and the set of transport plans is often denoted by $\Pi(\mu,\nu)$. The reformulation in \cite{gentil2017analogy} writes
		\begin{equation*}
		\frac{1}{2}D_\eps^2(\mu,\nu)=\min\{\eps H(\gamma|R_\eps)\,:\,\gamma\in \Pi(\mu,\nu)\},
		\end{equation*}
		where $H(\cdot|\cdot)$ denotes the relative entropy and $R_\eps$ is the joint law of a random variable of the form $(B_0,B_\eps)$, $(B_t)_{t \geq 0}$ being a reversible Brownian motion. Knowing the transition kernel of the Brownian motion, let us define the cost $c_\eps(x,y)$ through the formula:
		\begin{equation*}
		\frac{\D R_\eps}{\D x \otimes \D y}(x,y) = \sigma_\eps(y-x) =: \frac{1}{\sqrt{2 \pi \eps}^d}\exp\left( -\frac{c_\eps(x,y)}{\eps} \right).
		\end{equation*}
		Then, it is possible to see that our problem can be reformulated as follows:
		\begin{equation}
		\label{def:D_eps_kanto}
		\frac{1}{2} D_\eps^2(\mu,\nu)=\min\left\{\int c_\eps d\gamma +\eps H(\gamma) + \frac{d \eps}{2} \log (2 \pi \eps) \,:\,\gamma\in \Pi(\mu,\nu)\right\},
		\end{equation}
		where the entroy is now a Boltzmann entropy for measures on the product space. This is the so-called {\it entropic regularization} of the optimal transport problem with cost $c_\eps$, as forgetting about the constant term which has an effect on the optimal value but not on the optimizers, it consists in adding a penalization of the form of an entropy to an optimal transport problem.

        \bigskip
		
		Solving problems of this type can be attacked numerically via the Sinkhorn algorithm, also called {\it  iterative proportional fitting procedure} (IPFP), which can be seen as an alternate maximization on the dual problem or as alternate projections for the Kullback-Leibler divergence in terms of the primal one. This was introduced long ago in 
		~\cite{sinkhorn1964relationship,sinkhorn1967diagonal}, and then popularized in optimal transport in~\cite{cuturi2013sinkhorn} and~\cite{benamou2015iterative}. This iterative algorithm is incredibly efficient, especially when $\eps$ is not too small.

        \bigskip
        
		It is interesting to observe that, if we replace the torus $\T^d$ with the Euclidean space~$\R^d$ (which leads to problems since the Lebesgue measures is not a probability measure on $\R^d$, but let us not comment on this technical issue), then we obtain the following explicit expression for $c_\eps$, independent of~$\eps$: 
		\begin{equation}
		\label{eq:euclidean_cost}
		c_\eps(x,y)=\frac{|x-y|^2}{2}.
		\end{equation}
		This makes it clear that, in the limit $\eps\to 0$, the computed value should converge to the squared Wasserstein distance (in its Kantorovich formulation, and not in the dynamic formulation that we presented here). This is rigorously established in~\cite{carlier2017convergence} under the necessary and sufficient condition that $H(\mu)$ and $H(\nu)$ are finite (otherwise, $D_\eps$ is infinite whatever $\eps>0$).
		
		In the case of the torus, identity~\eqref{eq:euclidean_cost} does not hold anymore, but is replaced by
		\begin{equation}
		\label{eq:torus_cost}
		c_\eps(x,y) = -\eps \log\left( \sum_{k \in \Z^d} \exp\left( -\frac{|Y+k - X|^2}{2\eps} \right) \right),
		\end{equation}
		where $X$ and $Y$ are any lifts of $x$ and $y$ from $\T^d$ to $\R^d$. Still:
		\begin{itemize}
			\item The convergence of $D_\eps$ towards $D$ at the level of the Kantorovich formulation can be proved using the ideas of~\cite{carlier2017convergence} in virtue of the uniform convergence
			\begin{equation*}
			c_\eps(x,y) \underset{\eps \to 0}{\longrightarrow} \frac{d(x,y)^2}{2},
			\end{equation*}
			where $d$ is the canonical distance on $\T^d$.
			\item The Sinkhorn algorithm can be used efficiently to compute $D_\eps$, as this efficiency is not affected by replacing the squared distance with $c_\eps$. The only difference with the standard case is that one needs to compute $c_\eps$ once for all, either by using formula~\eqref{eq:torus_cost}, or by solving the heat equation. 
			\item We believe that our result remains the same replacing $D_\eps$ by the quantity~\eqref{def:D_eps_kanto} in which $c_\eps$ is replaced by $d(x,y)^2/2$, but we did not try to prove this fact. Once our result is known, such a proof would reduce to establishing some quantitative stability in the entropic regularization of optimal transport with respect to the cost.
		\end{itemize}
		The same could also be done in bounded domains using reflected Brownian motions and the heat kernel with Neumann boundary conditions, but we decided not to insist in this paper on the boundary issues (in general, we should impose everywhere no-flux boundary conditions).
		
		We close this section by noticing that the proof of convergence $D_\eps\to D$ under the condition that $H(\mu)$ and $H(\nu)$ are finite is also possible to perform at the level of the dynamical formulations~\cite{leonard2012schrodinger,baradat2020small}.

		With all these observations, we believe that it is natural to study the effect of replacing $D$ by its regularized version in the JKO scheme, when letting both $\eps$ and $\tau$ go to $0$ in an appropriate regime.

	\subsection{Precise statement of the main result} 
		\label{subsec:ass}
	In this article, we consider energy functionals $\mathcal F$ of the form 
    \begin{equation}
	\label{eq:def_F}
	\mathcal F(\rho) \coloneq  \left\{   
	\begin{aligned}
	&\int \Big\{ V(x)\rho(x) + \frac{1}{2}W \ast \rho(x)  \rho(x) + f(\rho(x)) \Big\}\D x, &&\mbox{if }\rho \ll \Leb, \\
	&+ \infty,&&\mbox{otherwise},
	\end{aligned}
	\right.
	\end{equation}
	where $V,W: \T^d \to \R$ and $f: \R_+ \to \R$ satisfy assumptions to be specified in a few lines. At the formal level, it can be checked that for a given $\rho \in \P(\T^d)$ with $\rho \ll \Leb$,
\begin{equation*}
    \frac{\delta \mathcal F}{\delta \rho}[\rho] = V + W \ast \rho + f'(\rho),
\end{equation*}
    so that \eqref{eq:gradient_flow} writes
    \begin{equation*}
	\partial_t \rho - \Div ( \rho (\nabla V + \nabla W \ast \rho )) = \Delta g(\rho),
	\end{equation*}
	where $g : \R_+ \to \R$ is the function defined for all $s \in \R_+$ by
	\begin{equation}
	\label{eq:def_g}
	g(s) = sf'(s) - f(s).
	\end{equation}
    Of course, equation~\eqref{eq:regularized_GF} then becomes
    	\begin{equation}
	\label{eq:limit_PDE}
	\partial_t \rho - \Div ( \rho (\nabla V + \nabla W \ast \rho )) = \Delta g(\rho) + \frac{\alpha}{2} \Delta \rho.
	\end{equation}

	\subsubsection{Assumptions on the internal energy potential}
	
	To state our main result, we will need some assumptions on the function $f$ appearing in the definition of $\mathcal F$ in~\eqref{eq:def_F}. Our goal is to consider rather general functions $f$, imposing conditions on its smoothness and growth, but not relying on some specific kind of nonlinearity. The general assumptions we choose are clearly not optimal (see Remark~\ref{rem:assumption_nonlinearity}) but lead to rather direct proofs. Two different assumptions will appear at different stages of the proof of Theorem~\ref{thm:main}. 
	
	The first one is necessary for proving that for positive~$\tau,\eps$, equation~\eqref{eq:limit_PDE} is satisfied up to a term converging to $0$ as $\tau,\eps \to 0$. The second one concerns strong compactness and is therefore necessary to prove that the limiting curve is a distributional solution of the (nonlinear) limiting PDE~\eqref{eq:limit_PDE}.
	
	\begin{Ass}
		\label{ass:smooth_f}
		The function $f: \R_+ \to \R \cup \{+\infty\}$ is convex, and the domain where it is finite $\mathrm{Dom}(f)\coloneq \{f<+\infty\}$ is a nonnempty and nontrivial interval of $\R_+$. In other words, calling $d_-,d_+ \in \R_+ \cup \{ + \infty \}$ the lower and upper bound of $\mathrm{Dom}(f)$, we have $d_- < d_+$. Moreover, we assume that $f$ is continuous on $\mathrm{Dom}(f)$, of class $C^3$ on its interior $(d_-,d_+)$, and that
		\begin{equation}
		\label{eq:ass_f_smooth}
		\mbox{if } d_- > 0, \mbox{ then } f'(s) \underset{s \to d_-}{\longrightarrow} - \infty \qquad \mbox{and}\qquad \mbox{if } d_+ < +\infty, \mbox{ then } f'(s) \underset{s \to d_+}{\longrightarrow} + \infty.
		\end{equation}
		Finally, we assume that $d_- < 1 < d_+$.
	\end{Ass}
	
	\begin{Rem}
		\label{rem:assumption_f_smooth}
        Under Assumption~\ref{ass:smooth_f}, the function $g$ defined by~\eqref{eq:def_g} is well defined on $(d_-,d_+)$. As, it is increasing, we can extend it with potentially infinite values at $d_-$ and $d_+$.
			
            The Assumption $d_- < 1 < d_+$ is very natural. Indeed, in this paper, we consider measures $\rho \in \P(\T^d)$ such that $\mathcal F(\rho)<+\infty$, that is, measures with densities satisfying
			\begin{equation*}
			\int f(\rho) \D x < + \infty.
			\end{equation*}
			But by Jensen's inequality, if $f$ is convex,
			\begin{equation*}
			\int f(\rho) \geq f(1).
			\end{equation*}
			So in particular, for such a $\rho$ to exist, $1 \in \mathrm{Dom}(f) \subset [d_-,d_+]$. In addition, under condition~\eqref{eq:ass_f_smooth}, it is easy to see that if $1 \in \{d_-,d_+\}$, then the Lebesgue measure is the only measure for which $\mathcal F$ is finite. We do not want to work in this situation, and therefore we assume $d_- < 1 < d_+$.
	\end{Rem}
	Our second assumption is the following.
	\begin{Ass}
		\label{ass:strong_compactness}
		Either $\alpha>0$ and we have nothing more to assume. Or $\alpha = 0$, and then, we need two assumptions. First, we need the following growth condition: either the upper bound $d_+$ is finite, or the function $h$ defined for all $s \in (d_-, d_+)$ by
			\begin{equation*}
			h(s) \coloneq  \int_1^s \sqrt r f''(r) \D r,
			\end{equation*}
			satisfies
			\begin{equation}
			\label{eq:growth_condition}
			\frac{h(s)}{\displaystyle s^{\frac{d-2}{2d}}} \underset{s \to + \infty}{\longrightarrow} + \infty.
			\end{equation}
			
			Second, we assume that at least one of the following conditions holds
			\begin{enumerate}[label=(\roman*)]
				\item $f$ is superlinear,
				\item $d_- = 0$,
				\item $f(d_-) = +\infty$.
			\end{enumerate}
			
	\end{Ass}
	\begin{Rem}
		\label{rem:assumption_nonlinearity}
	
			The fact that we do not need any further assumption in the case when $\alpha>0$ can be understood as follows. In that case, the strong compactness needed to pass to the limit directly comes from the regularity properties of the Schr\"odinger problem. On the other hand, when $\alpha=0$, the strong compactness has to be provided by the limit PDE, and we have to make assumptions similar to the ones of the non-entropic JKO scheme. We chose conditions that provide a rather simple proof in the classical case, but which are not optimal. We believe that any assumption which would make the classical JKO scheme converge would be enough to prove our convergence result with $\alpha = 0$, but we wanted to stay in a rather easy setting in order to emphasize the arguments specific to the entropic case.
			
            A quick computation shows that if $f: s \mapsto s\log s$, then $h(s) = \mathcal{O}_{s \to + \infty}( s^{1/2} )$, so the condition~\eqref{eq:growth_condition} is satisfied. In the case where $f: s \mapsto s^m/(m-1)$, $m \neq 1$, that is the porous medium or fast diffusion case, then $h(s) = \mathcal O_{s \to + \infty} (s^{m - 1/2}) $. The assumption becomes $m> 1 - 1/d$, which is essentially the same as~\cite[Condition~(10.4.21)]{amb06}, used in~\cite[Example~11.2.4]{amb06}. Here, we will provide a straightforward proof.
		
	\end{Rem}
	
	\subsubsection{Precise statement of the main result}
	
	We are now in a position to state our main result.
	
	\begin{Thm}
		\label{thm:main}
		Let us assume that $V$ and $W$ are of class $C^2$, that $f$ satisfies Assumption~\ref{ass:smooth_f}, and let us define $\mathcal F$ as in~\eqref{eq:def_F} and $g$ as in~\eqref{eq:def_g}. Let us consider two sequences of positive numbers $(\tau_k)_{k \in \N}$ and $(\eps_k)_{k \in \N}$ such that
		\begin{equation*}
		\tau_k\underset{k \to + \infty}{\longrightarrow} 0 \qquad \mbox{and} \qquad \alpha_k \coloneq  \frac{\eps_k}{\tau_k}\underset{k \to + \infty}{\longrightarrow} \alpha \in [0,+\infty).
		\end{equation*}		
		Let $\rho_0 \in \P(\T^d)$ satisfy $\mathcal F(\rho_0)< + \infty$ and $H(\rho_0)< + \infty$. Let $(\rho^k_n)_{n \in \N}$ be a sequence satisfying the inductive relation~\eqref{eq:entropic_JKO} with $\eps = \eps_k$ and $\tau = \tau_k$. Finally, let us define for all $t\geq 0$
		\begin{equation*}
		\bar \rho^k(t) \coloneq  \rho^k_{\lceil t/\tau_k \rceil}.
		\end{equation*}
		Then, for any $T>0$, the following assertions hold:
		\begin{itemize}
			\item The sequence $(\bar \rho^k)$ is compact  for the uniform convergence of functions on $[0,T]$ valued into $ \P(\T^d)$, when this set of probabilities is endowed with the distance $D$. Moreover, its limit points belong to $C([0,T]; \P(\T^d))$.
			\item For all $k \in \N$ and $t\in[0,T]$, $\bar \rho^k(t)$ has a density, $g(\bar \rho_k)$ belongs to $L^1([0,T] \times \T^d)$ and the following convergence holds in $\mathcal D'((0,T) \times \T^d)$:
			\begin{equation}
			\label{eq:distributional_convergence}
			\partial_t \bar \rho^k - \Div ( \bar \rho^k (\nabla V + \nabla W \ast \bar \rho^k )) - \Delta g(\bar \rho^k) - \frac{\alpha_k}{2} \Delta \bar \rho^k \underset{k \to + \infty}{\longrightarrow} 0.
			\end{equation}
			\item If in addition, $f$ satisfies Assumption~\ref{ass:strong_compactness}, any limit point $\rho$ of $(\bar \rho^k)$ given by the first point has a density, $g(\rho)$ belongs to $L^1([0,T] \times \T^d)$, and $\rho$ is a distributional solution of 
			\begin{equation}
			\label{eq:elliptic_reg_PDE}
			\left\{ 
			\begin{gathered}
			\partial_t \rho - \Div ( \rho (\nabla V + \nabla W \ast \rho )) = \Delta g(\rho) + \frac{\alpha}{2} \Delta \rho,\\
			\rho|_{t=0} = \rho_0.
			\end{gathered}
			\right.
			\end{equation}
		\end{itemize}	
	\end{Thm}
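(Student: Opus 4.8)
\emph{Strategy and a priori estimates.} The proof follows the usual architecture for convergence of JKO iterations, the Schr\"odinger-specific ingredients being the properties of $D_\eps$ collected in Section~\ref{sec:sch}. Write $\rho^k_n$ for $\rho^{\tau_k,\eps_k}_n$ and fix $T>0$. First I would test the minimality in \eqref{eq:entropic_JKO} against the competitor $\rho=\rho^k_n$, which gives
\[
\frac{D_{\eps_k}(\rho^k_n,\rho^k_{n+1})^2}{2\tau_k}+\mathcal F(\rho^k_{n+1})\;\le\;\frac{D_{\eps_k}(\rho^k_n,\rho^k_n)^2}{2\tau_k}+\mathcal F(\rho^k_n),
\]
and then combine two inputs from Section~\ref{sec:sch}: a lower bound $D_\eps(\mu,\nu)^2\ge D(\mu,\nu)^2+\eps(H(\mu)+H(\nu))$ (obtained, in Benamou--Brenier variables, by splitting the velocity as a divergence-type part plus $\tfrac{\eps}{2}\nabla\log\rho$ and discarding a nonnegative Fisher-type remainder; in particular $D\le D_\eps$ since $H\ge 0$ on $\P(\T^d)$), together with an upper bound on the \emph{diagonal} cost showing that $D_\eps(\mu,\mu)^2$ equals $2\eps H(\mu)$ up to a nonnegative remainder $r_\eps(\mu)$ small enough to be summed over the $\sim T/\tau_k$ iterations. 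Substituting, the $2\eps_k H(\rho^k_n)$ terms cancel, the $\eps_k H(\rho^k_n)-\eps_k H(\rho^k_{n+1})$ telescope, $\mathcal F$ and $H$ are bounded below on $\P(\T^d)$, and $\eps_k/\tau_k$ stays bounded; summing over $n\le\lceil T/\tau_k\rceil$ yields, uniformly in $k$,
\[
\sup_{n,k}\Bigl(\mathcal F(\rho^k_n)+\tfrac{\alpha_k}{2}H(\rho^k_n)\Bigr)<+\infty
\qquad\text{and}\qquad
\sum_n D(\rho^k_n,\rho^k_{n+1})^2\le C\tau_k .
\]
By Cauchy--Schwarz the second bound gives $D(\bar\rho^k(s),\bar\rho^k(t))\le\sqrt C\,(|t-s|^{1/2}+\tau_k^{1/2})$ on $[0,T]$; since $(\P(\T^d),D)$ is compact, Arzel\`a--Ascoli produces a subsequence converging uniformly to some $\rho\in C([0,T];\P(\T^d))$, which is moreover $1/2$-H\"older, proving the first bullet. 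Isolating the right estimate on $D_\eps(\mu,\mu)^2$ so that $\sum_n r_{\eps_k}(\rho^k_n)=O(\tau_k)$, with no smallness condition relating $\eps_k$ and $\tau_k$, is precisely what eliminates the hypothesis $\eps|\log\eps|\le C\tau^2$ of \cite{carlier2017convergence}.

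\emph{One-step optimality: the PDE up to a vanishing error.} Fix $k,n$ and use the $\tau$-rescaled dynamic formulation \eqref{eq:rescaled_sch}: $\rho^k_{n+1}$ is the terminal value of an optimal pair $(\rho^{k,n}_t,v^{k,n}_t)_{t\in[0,\tau_k]}$ solving $\partial_t\rho+\Div(\rho v)=\tfrac{\eps_k}{2\tau_k}\Delta\rho$ started at $\rho^k_n$, the velocity being a gradient solving a viscous Hamilton--Jacobi equation; the Euler--Lagrange equation of \eqref{eq:entropic_JKO} forces the terminal velocity to be $v^{k,n}_{\tau_k}=-\nabla\tfrac{\delta\mathcal F}{\delta\rho}[\rho^k_{n+1}]=-\nabla(V+W*\rho^k_{n+1}+f'(\rho^k_{n+1}))$. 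Assumption~\ref{ass:smooth_f}, in particular the blow-up \eqref{eq:ass_f_smooth} of $f'$ at the endpoints of $\mathrm{Dom}(f)$, is what keeps $\rho^k_{n+1}$ in the interior of $\mathrm{Dom}(f)$ so that this is licit; the scheme also provides a ``total energy'' bound $\sum_n\int_0^{\tau_k}\!\!\int|v^{k,n}_t|^2\rho^{k,n}_t\,\D t\le C$ (from $\int_0^{\tau_k}\!\int\tfrac{|v|^2}{2}\rho=\tfrac{D_{\eps_k}^2}{2\tau_k}-\tfrac{\eps_k}{\tau_k}H$ and the comparison above), which together with the Euler--Lagrange identity controls $\nabla h(\bar\rho^k)=\sqrt{\bar\rho^k}\,\nabla f'(\bar\rho^k)$ in $L^2((0,T)\times\T^d)$ and gives $g(\bar\rho^k)\in L^1$ (recall $\nabla g(\rho)=\sqrt\rho\,\nabla h(\rho)$, $g$ as in \eqref{eq:def_g}). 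Then, for $\xi\in C^\infty(\T^d)$, testing the continuity equation against $\xi$ over $[0,\tau_k]$ gives
\[
\int\xi\,\D(\rho^k_{n+1}-\rho^k_n)=\int_0^{\tau_k}\!\!\int\Bigl(\nabla\xi\cdot v^{k,n}_t+\tfrac{\eps_k}{2\tau_k}\Delta\xi\Bigr)\rho^{k,n}_t\,\D t ;
\]
replacing $v^{k,n}_t$ by $v^{k,n}_{\tau_k}$ and $\rho^{k,n}_t$ by $\rho^k_{n+1}$, the first term becomes $\tau_k\int\xi\bigl(\Div(\rho^k_{n+1}(\nabla V+\nabla W*\rho^k_{n+1}))+\Delta g(\rho^k_{n+1})\bigr)$ (using $\rho\nabla f'(\rho)=\nabla g(\rho)$), while the second becomes $\tfrac{\eps_k}{2}\langle\Delta\xi,\rho^k_{n+1}\rangle=\tau_k\tfrac{\alpha_k}{2}\langle\Delta\xi,\rho^k_{n+1}\rangle$, exactly the new linear-diffusion term. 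The errors in these two replacements, after summation in $n$ via Cauchy--Schwarz and the bounds above, are of order $\tau_k$ (the second-order transport error; and for the diffusion part, $\tfrac{\eps_k}{\tau_k}\sum_n\int_0^{\tau_k}W_1(\rho^{k,n}_t,\rho^k_{n+1})\,\D t=O(\eps_k)$). Summing over $n$ against a time-dependent test function, moving the discrete time difference onto $\partial_t\xi$ by Abel summation, produces \eqref{eq:distributional_convergence}; this is the second bullet.

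\emph{Strong compactness and identification of the limit.} The only nonlinear term being $\Delta g(\bar\rho^k)$, it remains to obtain strong $L^1((0,T)\times\T^d)$-compactness of $(\bar\rho^k)$, the time-regularity being already provided by the $D$-estimate above. If $\alpha>0$: expanding the total-energy bound in the variables $(w,\tfrac{\eps_k}{2\tau_k}\nabla\log\rho)$ and using $\eps_k/\tau_k\to\alpha>0$ gives a uniform bound on the Fisher information $\int_0^T I(\cdot)\,\D t$ of the interpolated curve, hence on $\nabla\sqrt{\,\cdot\,}$ in $L^2$, so the Aubin--Lions--Simon lemma yields strong $L^1$-convergence of $\bar\rho^k$; the $W^{1,1}$-bound on $g(\bar\rho^k)$ coming from $\nabla g(\bar\rho^k)=\sqrt{\bar\rho^k}\,\nabla h(\bar\rho^k)\in L^1$ then gives equiintegrability. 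If $\alpha=0$: no diffusive gain is available and one argues as in the classical JKO scheme, using Assumption~\ref{ass:strong_compactness}: the $L^2$-bound on $\nabla h(\bar\rho^k)$, the growth condition \eqref{eq:growth_condition}, and the Sobolev embedding give a uniform $L^p_{t,x}$-bound on $\bar\rho^k$ for some $p>1$, hence (with the time-regularity and Aubin--Lions) strong $L^1$-compactness and equiintegrability of $g(\bar\rho^k)$, while conditions (i)--(iii) prevent mass from accumulating at $d_-$. In both cases one passes to the limit in \eqref{eq:distributional_convergence} (the convolution term through the strong convergence) to get the PDE in \eqref{eq:elliptic_reg_PDE} distributionally, and $\rho|_{t=0}=\rho_0$ follows from the uniform H\"older estimate near $t=0$; this is the third bullet.

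\emph{Main obstacles.} I expect two genuinely delicate points. The first, which is where the improvement over \cite{carlier2017convergence} lies, is the sharp control of the near-diagonal Schr\"odinger cost: showing that $D_\eps(\mu,\mu)^2-2\eps H(\mu)$ is small enough, uniformly over $\mu$ with $\mathcal F(\mu)$ bounded, to be summable over $\sim T/\tau_k$ steps, which requires a careful analysis of the Schr\"odinger problem through its dynamic formulation. The second is making the one-step argument fully rigorous --- justifying the first-variation identity for $D_\eps^2$, identifying the terminal velocity of the Schr\"odinger bridge with $-\nabla\tfrac{\delta\mathcal F}{\delta\rho}[\rho^k_{n+1}]$ (this is where Assumption~\ref{ass:smooth_f} enters), and quantifying all the errors so that exactly the coefficient $\alpha/2$ survives in front of $\Delta\rho$ in \eqref{eq:distributional_convergence} while every other correction vanishes.
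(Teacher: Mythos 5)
Your Step 1 contains a genuine gap that prevents the argument from going through, and it sits exactly at the paper's first new idea. You test minimality against the stationary competitor $\rho=\rho^k_n$, which produces $D_{\eps_k}(\rho^k_n,\rho^k_n)^2/(2\tau_k)$ on the right-hand side, and you then need the remainder $r_\eps(\mu):=D_\eps(\mu,\mu)^2-2\eps H(\mu)$ to be summable over $\sim T/\tau_k$ steps. But from \eqref{eq:Sch_symmetric} with the constant-in-time competitor $c=0$, the only available bound is $r_\eps(\mu)\lesssim\eps^2\int|\nabla\log\mu|^2\mu$, i.e.\ the Fisher information of $\mu$, which is \emph{not} controlled by $\mathcal F(\mu)$ or $H(\mu)$; without smoothness the sharp estimate is of order $\eps|\log\eps|$, and requiring that to be $O(\tau_k^2)$ per step \emph{is} the condition $\eps|\log\eps|\leq C\tau^2$ of \cite{carlier2017convergence} that the paper sets out to remove. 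You list this as a ``main obstacle'' but the route you sketch cannot resolve it. The paper's fix is to compare $\rho^k_{n+1}$ with the heat-regularized measure $\rho^k_n*\sigma_{\eps_k}$ in the JKO step, and to bound $D_{\eps_k}(\rho^k_n,\rho^k_n*\sigma_{\eps_k})^2/(2\tau_k)$ using the trivial dynamical pair $(\rho^k_n*\sigma_{\alpha_k s}, v\equiv 0)$ in \eqref{eq:rescaled_sch}: its action is exactly $\frac{\eps_k}{\tau_k}H(\rho^k_n)$, with no Fisher-type residue, the entropic terms cancel across the inequality, and the only price is $\mathcal F(\rho^k_n*\sigma_{\eps_k})-\mathcal F(\rho^k_n)=O(\eps_k)=O(\tau_k)$, which follows from $C^2$-smoothness of $V,W$ (via $\sup(A*\sigma_t-A)\leq Ct$) and Jensen for the convex internal energy. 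No relation between $\eps_k$ and $\tau_k$ is needed.

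There is a second gap in your strong-compactness argument. The PDE \eqref{eq:distributional_convergence} involves the piecewise-constant curve $\bar\rho^k$, so passing to the limit in $\Delta g(\bar\rho^k)$ requires strong compactness of the \emph{discrete-time values} $\rho^k_n$. Your proposed Fisher-information bound, coming from expanding the action in the $w$-variables (i.e.\ \eqref{eq:L2bound_c} and \eqref{eq:L2bound_w}), controls $\int_0^T\int|\nabla\log\rho^k|^2\rho^k$ for the \emph{interpolating Schr\"odinger bridge} $\rho^k$, and nothing in your sketch transfers this to $\bar\rho^k$. The paper's second new ingredient is precisely the bridge: the monotonicity in time of the backward kinetic energy $\cev e=\tfrac12\rho|\nabla\psi|^2$ (from \eqref{eq:PDE_kinetic}), which gives $\tau_k\int|\nabla\psi^k((n{+}1)\tau_k)|^2\rho^k_{n+1}\leq\int_{n\tau_k}^{(n+1)\tau_k}\int|w^k|^2\rho^k$; combined with the Euler--Lagrange identity for $\psi^k((n{+}1)\tau_k)$ this yields \eqref{eq:main_compactness_estimate}, which bounds $\nabla h(\bar\rho^k)$ and $\alpha_k\nabla\sqrt{\bar\rho^k}$ in $L^2_{t,x}$ and is what the Rossi--Savar\'e/Aubin--Lions argument actually consumes. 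The rest of your sketch (Euler--Lagrange at each step, Abel summation, passage to the limit) is broadly aligned with the paper, but both points above need the paper's specific constructions to be made correct.
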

	
	\begin{Rem}
		  In the case when $\alpha = 0$, that is, when the regularization parameter converges to $0$ faster than the time step, we improve a previous result from~\cite{carlier2017convergence} where the authors had to assume the much stronger condition $\eps_k |\log \eps_k| = \mathcal O_{k \to + \infty} (\tau_k^2)$.
			
            Our result shows that to get a linear diffusion term $\Delta \rho$ in the limiting PDE, we can either act at the level of the energy functional by including some Boltzmann entropy (if $f(s) = s \log s$, then $g(s) = s$), act at the level of the "distance" part of the scheme by keeping $\alpha>0$, or mix both possibilities. As it is easier to compute Schr\"odinger costs than Wasserstein distances, especially for large values of $\eps$, it seems relevant to choose to act at the level of the distance only. Yet, a rigorous comparison of the two approaches would also need to know the convergence rates for both schemes: it is useful to compute each time step more quickly only provided the convergence rate is not dramatically worsened.
		
        The picky reader could observe that the case $\alpha=0$ and $f=0$ is not covered in this theorem. Of course, in that case, the convergence is even easier as strong compactness for $\rho$ is not needed to justify the limiting PDE.
		
	\end{Rem}

	\section{Preliminaries on the Schr\"odinger problem}
	\label{sec:sch}
	In this section, we gather a few standard results about the Schr\"odinger problem that will be useful in the proof of Theorem~\ref{thm:main}. In that proof, there will always be a rescaling by $\tau$ as in~\eqref{eq:rescaled_sch}, so the only somehow unusual aspect of our presentation is that we will carry this rescaling all along to be able to use the results directly later. Of course, to get more standard formulations, it suffices to take $\tau = 1$. We give sketches of proofs when we think it can be useful, but mainly give references to where the results can be found in the literature. Only Proposition~\ref{prop:right_derivative_sch} is mainly useful in the context of JKO, and hence somehow new, although expected. Therefore, for this last proposition, we provide a complete proof.
	
	First, the Schr\"odinger cost is convex and lower semi-continuous in the topology of narrow convergence.
	\begin{Prop}
		\label{prop:sch_convex}
		Let $\eps>0$. The quantities $(\mu,\nu) \mapsto D_\eps^2(\mu,\nu)$  and $(\mu,\nu) \mapsto D_\eps^2(\mu,\nu)/2 - \eps H(\mu)$ are convex and lower semi-continuous in the topology of narrow convergence.
	\end{Prop}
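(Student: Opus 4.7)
The plan is to write the problem defining $D_\eps^2$ in its Benamou--Brenier momentum formulation, where convexity and lower semi-continuity become transparent. Setting $E = \rho v$, the kinetic term $\int_0^1 \int |v|^2 \rho /2$ becomes the classical Benamou--Brenier functional
\[
\mathcal K(\rho, E) \coloneq \int_0^1 \int_{\T^d} B(\rho, E) \D x \D t, \qquad B(r,e) = \begin{cases} |e|^2/(2r) & r>0, \\ 0 & (r,e) = (0,0), \\ +\infty & \text{otherwise,}\end{cases}
\]
extended in the standard way to pairs of Borel measures $(\rho, E)$ on $[0,1] \times \T^d$ with values in $\R \times \R^d$, where it is known to be convex, positively $1$-homogeneous, and lower semi-continuous for the narrow topology. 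The diffusive continuity constraint $\partial_t \rho + \Div E = \frac{\eps}{2} \Delta \rho$ with $\rho_{|t=0} = \mu$ and $\rho_{|t=1} = \nu$ can be recast as the affine condition
\[
\int_0^1\!\!\int \bigl(\partial_t \varphi + \tfrac{\eps}{2} \Delta \varphi\bigr) \D \rho + \int_0^1\!\!\int \nabla \varphi \cdot \D E = \int \varphi(1,\cdot) \D \nu - \int \varphi(0, \cdot) \D \mu,
\]
for every $\varphi \in C^\infty([0,1] \times \T^d)$. Writing $A_\eps(\mu, \nu)$ for the infimum of $\mathcal K(\rho, E)$ over pairs satisfying this constraint, we have $D_\eps^2(\mu,\nu)/2 = \eps H(\mu) + A_\eps(\mu, \nu)$.

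Convexity of $A_\eps$ is then immediate: convex combinations of admissible pairs for $(\mu_i, \nu_i)$ are admissible for the corresponding convex combination of data, and $\mathcal K$ is jointly convex. For narrow lower semi-continuity, I would argue directly: take $(\mu_n, \nu_n) \to (\mu, \nu)$ narrowly with $L \coloneq \liminf_n A_\eps(\mu_n, \nu_n) < +\infty$ (else nothing to prove), extract a subsequence realizing the liminf, and pick $(\rho_n, E_n)$ admissible with $\mathcal K(\rho_n, E_n) \leq L + 1/n$. The measure $\rho_n$ has unit mass on $[0,1] \times \T^d$ hence is tight, and by Cauchy--Schwarz $|E_n|([0,1] \times \T^d) \leq \sqrt{2 \mathcal K(\rho_n, E_n)}$ is bounded, so $(\rho_n, E_n)$ admits narrow cluster points $(\rho, E)$. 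Passing to the limit is free in the linear constraint (smooth test functions), and the narrow lower semi-continuity of $\mathcal K$ yields $A_\eps(\mu, \nu) \leq \mathcal K(\rho, E) \leq L$.

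The second claim of the proposition is exactly $A_\eps(\mu,\nu) = D_\eps^2(\mu,\nu)/2 - \eps H(\mu)$. The first follows by adding the function $(\mu, \nu) \mapsto 2\eps H(\mu)$, which is convex and narrowly lower semi-continuous on $\P(\T^d)^2$ by classical properties of the Boltzmann entropy. The main delicate point is controlling the momentum $E_n$ and the lower semi-continuity of $\mathcal K$ in presence of the diffusive source; because $\frac{\eps}{2}\Delta \rho$ enters the constraint only linearly, paired with smooth test functions, it is transported to the limit without difficulty, and no genuinely new argument beyond the classical Benamou--Brenier lsc theory is required.
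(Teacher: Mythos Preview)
Your proposal is correct and takes essentially the same approach as the paper: the paper's proof is a one-sentence sketch stating that convexity and lower semi-continuity follow from those of the cost together with linearity of the constraint once the problem is written in the momentum variables $(\rho,\rho v)$, with a reference to \cite{baradat2020small} for the details. You have precisely spelled out those details---the convex, lsc Benamou--Brenier action $\mathcal K$, the affine weak formulation of the diffusive continuity equation, and the direct-method compactness argument---so there is nothing to add.
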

	\begin{proof}[Formal proof]
		This is a direct consequence of the convexity and lower semi-continuity of the cost and linearity of the constraints in~\eqref{eq:def_Sch}, when considered in the variables $(\rho,\rho v)$. See~\cite[Section~2.1]{baradat2020small} for more details.
	\end{proof}
	
	Actually, the Schr\"odinger problem has several formulations, even in the Benamou-Brenier style. The one we presented in Definition~\ref{def:Sch} is forward in time in the sense that the density follows a forward parabolic equation, but there is another formulation that is backward in time, and a third one that is symmetric in time. We refer to~\cite{leonard2013survey,gentil2017analogy,gigli2019benamou} for the rigorous proofs. 
	
	\begin{Prop}
		\label{prop:equivalence}
		Let $\mu,\nu \in \P(\T^d)$ and $\eps,\tau>0$. We have
		\begin{align}
		\label{eq:Sch_backward} \frac{D_\eps(\mu,\nu)^2}{2\tau} &= \inf \left\{\frac{\eps}{\tau} H(\nu) + \int_0^\tau\hspace{-5pt} \int \frac{|w|^2}{2}\rho \D t \, \bigg| \, \begin{gathered}
		\partial_t \rho + \Div(\rho w) = - \frac{\eps}{2\tau} \Delta \rho, \\  \rho_0 = \mu, \, \rho_\tau = \nu  
		\end{gathered}\right\} \\
		\label{eq:Sch_symmetric} &= \inf \left\{\frac{\eps}{\tau} \frac{H(\mu) + H(\nu)}{2} + \frac{1}{2}\int_0^\tau\hspace{-5pt} \int \bigg[ |c|^2 + \left| \frac{\eps}{2\tau} \nabla \log \rho\right|^2\bigg]\rho \D t \, \bigg| \, \begin{gathered}
		\partial_t \rho + \Div(\rho c) = 0, \\  \rho_0 = \mu, \, \rho_\tau = \nu  
		\end{gathered}\right\}.
		\end{align}
	\end{Prop}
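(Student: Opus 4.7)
The plan is to establish both equalities by exhibiting a bijection between admissible triples for the three formulations that preserves the value of the cost. Given $(\rho, v)$ admissible for the forward formulation~\eqref{eq:rescaled_sch}, I would set
$$c \coloneq v - \frac{\eps}{2\tau}\nabla\log\rho, \qquad w \coloneq v - \frac{\eps}{\tau}\nabla\log\rho.$$
Since $\Div(\rho\,\nabla\log\rho) = \Delta\rho$, a direct substitution shows that $(\rho, c)$ satisfies the pure transport equation $\partial_t\rho + \Div(\rho c) = 0$, while $(\rho, w)$ satisfies the backward diffusion equation $\partial_t\rho + \Div(\rho w) = -\frac{\eps}{2\tau}\Delta\rho$; the endpoints $\rho_0 = \mu$, $\rho_\tau = \nu$ are unaffected. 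These maps are invertible as long as $\rho>0$ and $\nabla\log\rho$ is defined, so they match admissible triples in bijection.

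To match the cost functionals, the key input is the entropy dissipation identity. For the pure transport equation, multiplying by $\log\rho$ and integrating by parts (there are no boundary terms on $\T^d$) gives $\frac{\D}{\D t}H(\rho_t) = \int c\cdot\nabla\rho\,\D x$, hence $\int_0^\tau\int c\cdot\nabla\rho\,\D t = H(\nu) - H(\mu)$. Expanding $|v|^2 = |c + \frac{\eps}{2\tau}\nabla\log\rho|^2$ and multiplying by $\rho$ produces a transport term $|c|^2\rho/2$, a cross term $\frac{\eps}{2\tau}c\cdot\nabla\rho$, and a Fisher-type term $\frac{1}{2}\bigl|\frac{\eps}{2\tau}\nabla\log\rho\bigr|^2\rho$. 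Integrating in time and adding $\frac{\eps}{\tau}H(\mu)$, the cross term precisely converts the asymmetric boundary contribution into the symmetric average via $\frac{\eps}{\tau}H(\mu) + \frac{\eps}{2\tau}(H(\nu)-H(\mu)) = \frac{\eps}{\tau}\frac{H(\mu)+H(\nu)}{2}$, yielding~\eqref{eq:Sch_symmetric}.

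The backward case is analogous. Writing $v = w + \frac{\eps}{\tau}\nabla\log\rho$, the expansion generates a cross term $\frac{\eps}{\tau}\int w\cdot\nabla\rho$ and a Fisher-type term $\frac{\eps^2}{2\tau^2}\int|\nabla\log\rho|^2\rho$. The entropy identity for the backward equation now reads $\frac{\D}{\D t}H(\rho) = \int w\cdot\nabla\rho + \frac{\eps}{2\tau}\int|\nabla\log\rho|^2\rho$, so substituting for $\int w\cdot\nabla\rho$ creates a negative Fisher information contribution that cancels exactly with the one in the expansion. What remains is the shift $\frac{\eps}{\tau}(H(\nu)-H(\mu))$, which transforms $\frac{\eps}{\tau}H(\mu)$ into $\frac{\eps}{\tau}H(\nu)$, giving~\eqref{eq:Sch_backward}.

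The main obstacle to turning this sketch into a fully rigorous argument is the chain rule for $\log\rho$, which requires enough integrability of the Fisher information $\int_0^\tau\int|\nabla\log\rho|^2\rho\,\D t$; this bound is not automatic from finiteness of the forward cost. I would handle it either by a smooth approximation of admissible triples (the parabolic regularization inherent to the forward equation guarantees that minimizers are smooth, which is enough to close the argument once one reduces to quasi-minimizing sequences), or, consistently with the paper's convention of giving only formal sketches for this preliminary section, by referring the reader to~\cite{leonard2013survey, gentil2017analogy, gigli2019benamou}, where the equivalence is obtained via the stochastic interpretation (with $c$ the current velocity and $w$ the backward drift of the Schrödinger bridge).
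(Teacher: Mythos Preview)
Your proposal is correct and follows essentially the same route as the paper's formal proof: the same change of variables $c = v - \frac{\eps}{2\tau}\nabla\log\rho$, $w = v - \frac{\eps}{\tau}\nabla\log\rho$, the same expansion of $|v|^2$, and the same use of the entropy dissipation identity along the continuity equation to convert the cross term into the boundary entropy shift. The paper dispatches the backward case with ``proved in the same way'' whereas you spell it out, and both treatments explicitly flag the regularity issue and defer to the references for a fully rigorous argument.
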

	\begin{proof}[Formal proof]
		Let us explain the main idea without checking all the regularity needed to justify the computations. Let us consider $(\rho,v)$ a competitor for~\eqref{eq:rescaled_sch} and call
		\begin{equation*}
		c \coloneq  v - \frac{\eps}{2\tau} \nabla \log \rho, \qquad w \coloneq  v - \frac{\eps}{\tau} \nabla \log \rho.
		\end{equation*} 
		The first observation is that
		\begin{equation}
		\label{eq:continuity_equation}
		\partial_t \rho + \Div(\rho c ) = 0 \qquad \mbox{and}\qquad \partial_t \rho + \Div (\rho w ) = - \frac{\eps}{2\tau} \Delta \rho.
		\end{equation}
		In addition, we have
		\begin{equation*}
		\frac{1}{2}\int_0^\tau \hspace{-5pt} \int |v|^2\rho \D t = \frac{1}{2}\int_0^\tau \hspace{-5pt} \int |c|^2\rho \D t + \frac{1}{2}\int_0^\tau \hspace{-5pt} \int \left|\frac{\eps}{2\tau} \nabla \log \rho\right|^2\rho \D t + \frac{\eps}{2\tau}\int_0^\tau \hspace{-5pt} \int \left\{ c \cdot \nabla \log \rho \right\} \rho \D t.
		\end{equation*}
		On the other hand, equation~\eqref{eq:continuity_equation} implies
		\begin{equation*}
		\frac{\D }{\D t} \int \rho \log \rho = \int \left\{ c \cdot \nabla \log \rho \right\}\rho.
		\end{equation*}
		Therefore,
		\begin{equation*}
		\frac{\eps}{2\tau}\int_0^\tau \hspace{-5pt} \int \left\{ c \cdot \nabla \log \rho \right\} \rho \D t = \frac{\eps}{2\tau} \left( H(\nu) - H(\mu) \right).
		\end{equation*}
		Identity~\eqref{eq:Sch_symmetric} follows, and~\eqref{eq:Sch_backward} is proved in the same way.
	\end{proof}

	As a corollary of this result, we obtain the symmetry of $D_\eps$.
	\begin{Cor}
		\label{cor:sym}
		For any $\mu,\nu \in \P(\T^d)$ and $\eps>0$, $D_\eps(\mu,\nu) = D_\eps(\nu,\mu)$.
	\end{Cor}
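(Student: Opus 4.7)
The plan is to deduce the symmetry directly from the symmetric-in-time formulation~\eqref{eq:Sch_symmetric} given in Proposition~\ref{prop:equivalence}, by performing the obvious time-reversal change of variables. Because the Proposition provides a variational representation of $D_\eps^2(\mu,\nu)/(2\tau)$ in which the boundary conditions on $\rho$ at the two endpoints appear symmetrically (both through the continuity equation $\rho_0=\mu$, $\rho_\tau=\nu$ and through the symmetric entropy boundary term $(H(\mu)+H(\nu))/2$), we just need to check that the internal cost is preserved under reversing the direction of time.

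Concretely, fix $\eps,\tau>0$ and let $(\rho,c)$ be any admissible pair in~\eqref{eq:Sch_symmetric}, i.e.\ $\partial_t\rho+\Div(\rho c)=0$ on $[0,\tau]\times\T^d$ with $\rho_0=\mu$, $\rho_\tau=\nu$. I would introduce
\begin{equation*}
\tilde\rho(t,x):=\rho(\tau-t,x),\qquad \tilde c(t,x):=-c(\tau-t,x).
\end{equation*}
A direct computation shows that $\partial_t\tilde\rho+\Div(\tilde\rho\,\tilde c)=0$ with $\tilde\rho_0=\nu$ and $\tilde\rho_\tau=\mu$, so $(\tilde\rho,\tilde c)$ is an admissible competitor for the symmetric formulation of $D_\eps^2(\nu,\mu)/(2\tau)$. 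Moreover, changing variables $s=\tau-t$, one has
\begin{equation*}
\int_0^\tau\!\!\int\bigl[|\tilde c|^2+\bigl|\tfrac{\eps}{2\tau}\nabla\log\tilde\rho\bigr|^2\bigr]\tilde\rho\,\D t=\int_0^\tau\!\!\int\bigl[|c|^2+\bigl|\tfrac{\eps}{2\tau}\nabla\log\rho\bigr|^2\bigr]\rho\,\D t,
\end{equation*}
and of course $(H(\nu)+H(\mu))/2=(H(\mu)+H(\nu))/2$. Hence the cost of $(\tilde\rho,\tilde c)$ in the symmetric formulation for $(\nu,\mu)$ equals the cost of $(\rho,c)$ in the symmetric formulation for $(\mu,\nu)$.

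Taking the infimum on both sides yields $D_\eps^2(\nu,\mu)\leq D_\eps^2(\mu,\nu)$, and exchanging the roles of $\mu$ and $\nu$ gives the reverse inequality, hence equality. There is no real obstacle: the whole point of deriving~\eqref{eq:Sch_symmetric} in Proposition~\ref{prop:equivalence} was to rewrite the cost in a form in which the asymmetry between initial and final marginals (which is present in both~\eqref{eq:rescaled_sch} and~\eqref{eq:Sch_backward}) has been absorbed into a symmetric boundary term, so that symmetry becomes a one-line consequence of time reversal of the continuity equation.
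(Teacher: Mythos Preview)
Your proof is correct and follows exactly the paper's approach: the paper's proof is the one-liner ``Formulation~\eqref{eq:Sch_symmetric} is clearly invariant through time reversal,'' and you have simply spelled out the time-reversal map $(\rho,c)\mapsto(\tilde\rho,\tilde c)$ and verified that it preserves both admissibility and cost.
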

	\begin{proof}
		Formulation~\eqref{eq:Sch_symmetric} is clearly invariant through time reversal.
	\end{proof}
	
	The next result is concerned with the existence of optimizers for the minimization problems in play. It can be deduced from the existence results in~\cite{leonard2013survey} or~\cite{baradat2020small}.
	\begin{Prop}
		\label{prop:existence}
		Let $\mu,\nu \in \P(\T^d)$ and $\eps,\tau>0$. The Schr\"odinger cost $D_\eps(\mu,\nu)$ as defined in Definition~\ref{def:Sch} is finite if and only if $H(\mu)$ and $H(\nu)$ are finite. In that case, there exist unique minimizers $(\rho,v)$, $(\rho,c)$ and $(\rho,w)$ achieving the infimum in~\eqref{eq:def_Sch}, \eqref{eq:Sch_backward} and~\eqref{eq:Sch_symmetric} respectively, of common density. In particular 
		\begin{equation}
		\label{eq:formula_actions}
		\frac{\eps}{\tau} H(\mu) +  \int_0^\tau\hspace{-5pt} \int\! \frac{|v|^2}{2}\rho \D t = \frac{\eps}{\tau} \frac{H(\mu) + H(\nu)}{2} + \frac{1}{2} \int_0^\tau\hspace{-5pt} \int \hspace{-3pt}\bigg\{\! |c|^2 + \left| \frac{\eps}{2\tau} \nabla \log \rho\right|^2 \! \bigg\}\rho \D t  = \frac{\eps}{\tau} H(\nu) + \int_0^\tau\hspace{-5pt} \int \!\frac{|w|^2}{2}\rho \D t.
		\end{equation}
		In addition, $\rho$ is in $AC^2([0,\tau]; \P(\T^d))$, with metric derivative
		\begin{equation*}
		|\dot \rho(t)|^2 = \int |c(t)|^2 \rho(t), \qquad \mbox{for a.e.\ }t \in [0,\tau].
		\end{equation*}
		Finally, we have almost everywhere
		\begin{equation}
		\label{eq:link_v_w}
		v - \frac{\eps}{2\tau} \nabla \log \rho = c = w + \frac{\eps}{2\tau} \nabla \log \rho.
		\end{equation}
	\end{Prop}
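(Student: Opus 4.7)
The plan is to deduce the proposition from the static Kantorovich reformulation~\eqref{def:D_eps_kanto}, and then transfer existence, uniqueness and structural identities back to the three dynamical formulations of Proposition~\ref{prop:equivalence}.

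First I would settle the finiteness criterion. Since $\T^d$ is compact, $c_\eps$ is bounded, so finiteness of $D_\eps(\mu,\nu)^2$ is equivalent to the existence of some $\gamma \in \Pi(\mu,\nu)$ with $H(\gamma)<+\infty$. The standard decomposition $H(\gamma)=H(\gamma\,|\,\mu\otimes\nu)+H(\mu)+H(\nu)$, valid as soon as $H(\gamma)<+\infty$, forces $H(\mu),H(\nu)<+\infty$; conversely, under these two conditions the product plan $\mu\otimes\nu$ is an explicit competitor of finite entropy. For existence and uniqueness of the static minimizer $\gamma^*$, the direct method applies: $\Pi(\mu,\nu)$ is narrowly compact by Prokhorov, $\gamma \mapsto \int c_\eps \D\gamma + \eps H(\gamma)$ is narrowly lower semicontinuous, and it is strictly convex on the set of plans with these two marginals by strict convexity of the entropy.

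Next I would turn $\gamma^*$ into a dynamical object by interpolating along a Brownian bridge with diffusivity $\eps/\tau$. Concretely, let $\mathbf Q$ be the reversible Brownian motion on $\T^d$ of diffusivity $\eps/\tau$ conditioned on having joint endpoint law $\gamma^*$, and set $\rho(t)\coloneq (e_t)_\# \mathbf Q$, where $e_t$ is evaluation at time $t$. The transition density of the heat semigroup yields an explicit formula for $\rho$ that is smooth and strictly positive on $(0,\tau)\times \T^d$. Nelson's theory of forward, backward and current velocities then produces smooth drifts $v,w,c$ (written as conditional expectations along the bridge), which by construction satisfy the continuity/Fokker--Planck equations appearing in~\eqref{eq:def_Sch}, \eqref{eq:Sch_backward} and~\eqref{eq:Sch_symmetric} respectively, together with relation~\eqref{eq:link_v_w}.

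The identity~\eqref{eq:formula_actions} is then precisely the Fisher-information computation already sketched in the formal proof of Proposition~\ref{prop:equivalence}; the integration by parts used there is now rigorous because $\rho$ is smooth and bounded away from $0$ on $(0,\tau)$. Combined with the Benamou--Brenier lower bounds of Proposition~\ref{prop:equivalence}, this shows that our triple achieves the infimum in all three formulations simultaneously. Uniqueness follows by contradiction: any other dynamical minimizer would, via the joint law of its endpoints, produce a competitor in the static problem attaining the same value, contradicting uniqueness of $\gamma^*$. Finally, $\rho \in AC^2([0,\tau];\P(\T^d))$ with $|\dot\rho(t)|^2=\int|c(t)|^2\rho(t)$ is the Benamou--Brenier characterization of the metric derivative~\cite[Thm.~8.3.1]{amb06}: the continuity equation $\partial_t\rho+\Div(\rho c)=0$ with $\int_0^\tau\!\int|c|^2\rho<+\infty$ yields $|\dot\rho|^2 \leq \int|c|^2\rho$, and equality holds since $c$ is the element of minimal $L^2(\rho)$-norm generating the continuity equation, by strict convexity of $(\rho,m)\mapsto|m|^2/\rho$ in the momentum $m=\rho c$.

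The main obstacle, and the reason the authors simply cite~\cite{leonard2013survey,baradat2020small}, is making the Brownian-bridge construction and Nelson's decomposition fully rigorous on $\T^d$: in particular showing that the conditional expectations defining $v,w,c$ are well defined and smooth on the open interval $(0,\tau)$, and that the time-boundary traces of $\rho$ genuinely equal $\mu$ and $\nu$ even when these marginals are singular. Once these points are settled the verification of~\eqref{eq:formula_actions}, \eqref{eq:link_v_w} and the metric derivative identity is essentially mechanical.
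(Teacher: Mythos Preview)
The paper does not actually prove this proposition: it simply states that the result ``can be deduced from the existence results in~\cite{leonard2013survey} or~\cite{baradat2020small}''. Your sketch is a faithful outline of what those references contain --- static entropic problem, strict convexity of the entropy, Schr\"odinger bridge interpolation, Nelson's forward/backward/current decomposition, and the Fisher-information identity already rehearsed in the formal proof of Proposition~\ref{prop:equivalence}. In that sense you are aligned with the paper's ``approach'', just more explicit about it, and you correctly flag that the technical cost lies in making the bridge construction and the boundary traces rigorous.

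One point deserves tightening. Your uniqueness argument reads: ``any other dynamical minimizer would, via the joint law of its endpoints, produce a competitor in the static problem''. But a competitor in~\eqref{eq:def_Sch} is a pair $(\rho,v)$, not a path measure; it has marginals $\mu,\nu$ but no canonical \emph{coupling} of endpoints. To extract a $\gamma\in\Pi(\mu,\nu)$ you must first lift $(\rho,v)$ to a law on $C([0,\tau];\T^d)$ via a superposition/representation theorem for the Fokker--Planck equation, then take $(e_0,e_\tau)_\#$ of that lift, and finally argue that among path measures with endpoint law $\gamma^*$ the Brownian-bridge mixture is the unique entropy minimizer. This is all in~\cite{leonard2013survey}, but it is a genuine step, not a triviality. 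A shorter route, closer to the spirit of Proposition~\ref{prop:sch_convex}, is to observe that the dynamical functional is strictly convex on path space (relative entropy with respect to the reversible Brownian motion), which gives uniqueness of the optimal path law directly and hence of its density--drift pair.
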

	
	The next proposition is a duality result, which is a direct application of the results in~\cite{dimarino2019optimal} once the equivalence between static and Benamou-Brenier formulations of the Schr\"odinger problem  -- as presented in~\cite{gentil2017analogy} -- is understood. Here, we will propose a very different formal proof based on duality arguments starting from the Benamou-Brenier formulation.
	
	\begin{Prop}
		\label{prop:duality}
		Let $\mu$, $\nu \in \P(\T^d)$ and $\eps,\tau >0$. Then
		\begin{equation}
		\label{eq:dual_forward}\frac{D_\eps(\mu,\nu)^2}{2\tau} =  \frac{\eps}{\tau} H(\mu) + \sup_{\varphi \in L^1(\nu)} \Big\cg \varphi, \nu \Big\cd - \Big\cg \frac{\eps}{\tau} \log \Big( \left\{ \exp\left(\frac{\tau \varphi}{\eps}\right) \right\}\ast \sigma_{\eps} \Big), \mu \Big\cd.
		\end{equation}
		
		Moreover, as soon as $H(\mu)$ and $H(\nu)$ are finite, the maximization problem on the right-hand side admits a maximizer $\varphi$, and if we call
		\begin{equation}
		\label{eq:psi_from_phi}
		\psi \coloneq  \frac{\eps}{\tau}\log \mu -\frac{\eps}{\tau} \log \Big( \left\{  \exp\left(\frac{\tau \varphi}{\eps}\right) \right\}\ast \sigma_{\eps} \Big),
		\end{equation}
		we have
		\begin{equation}
		\label{eq:phi_from_psi}
		\varphi  = \frac{\eps}{\tau}\log \nu - \frac{\eps}{\tau} \log \Big( \left\{ \exp\left(\frac{\tau \psi}{\eps}\right) \right\}\ast \sigma_{\eps} \Big).
		\end{equation}
		Stated differently, $\varphi$ satisfies the fixed point condition:
		\begin{equation}
		\label{eq:optimality_condition_varphi}
		\varphi = \frac{\eps}{\tau} \log \nu  - \frac{\eps}{\tau}\log \left( \left\{ \frac{\mu}{\exp\left( \frac{\tau}{\eps} \varphi \right)\ast \sigma_\eps} \right\} \ast \sigma_\eps \right).
		\end{equation}
		In particular, $\varphi - \frac{\eps}{\tau} \log \nu$ is of class $C^\infty$.
		
		Finally, there is a unique $\varphi$ satisfying~\eqref{eq:optimality_condition_varphi} up to an additive constant and hence a unique maximizer in~\eqref{eq:dual_forward} up to an additive constant.
	\end{Prop}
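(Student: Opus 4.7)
My approach is a Lagrangian duality performed directly on the Benamou-Brenier formulation~\eqref{eq:rescaled_sch}. Setting $m = \rho v$ makes the cost $|m|^2/(2\rho)$ jointly convex and the constraint $\partial_t \rho + \Div m - \frac{\eps}{2\tau}\Delta\rho = 0$ linear in $(\rho,m)$, so I introduce a Lagrange multiplier $\phi(t,x)$ for the PDE and integrate by parts in both time and space, using $\rho_0 = \mu$ and $\rho_\tau = \nu$. Pointwise minimization over $m$ gives $m = -\rho\nabla\phi$; the subsequent infimum over $\rho \geq 0$ is finite only when the Hamilton--Jacobi--Bellman inequality $\partial_t\phi + \frac{\eps}{2\tau}\Delta\phi - \frac{|\nabla\phi|^2}{2} \geq 0$ holds, and saturation is forced at the supremum. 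Setting $\phi(\tau,\cdot) \coloneq -\varphi$, the dual objective reads $\frac{\eps}{\tau}H(\mu) + \langle\phi(0),\mu\rangle - \langle\phi(\tau),\nu\rangle$.

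I then linearize the HJB equation by the Hopf--Cole substitution $u \coloneq \exp(-\tau\phi/\eps)$. A direct computation turns it into the backward heat equation $\partial_t u + \frac{\eps}{2\tau}\Delta u = 0$ with terminal data $u(\tau) = \exp(\tau\varphi/\eps)$. Reversing time, this is equivalent to running a forward heat flow of diffusivity $\eps/(2\tau)$ during time $\tau$, which amounts to convolution with the heat kernel $\sigma_\eps$. Hence $u(0) = \exp(\tau\varphi/\eps) \ast \sigma_\eps$ and $\phi(0) = -\frac{\eps}{\tau}\log(\exp(\tau\varphi/\eps) \ast \sigma_\eps)$; plugging this into the dual objective produces exactly~\eqref{eq:dual_forward}.

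To obtain the fixed-point condition, I compute the first variation of the concave dual functional $F(\varphi) \coloneq \langle\varphi,\nu\rangle - \langle\frac{\eps}{\tau}\log(\exp(\tau\varphi/\eps)\ast\sigma_\eps), \mu\rangle$ in a test direction $\eta$. Using the symmetry $\sigma_\eps(y-x) = \sigma_\eps(x-y)$ to swap the convolution with the integration against $\mu$, the Euler--Lagrange equation $dF(\varphi)\cdot\eta = 0$ becomes
\begin{equation*}
    \nu = \exp(\tau\varphi/\eps) \cdot \left[\frac{\mu}{\exp(\tau\varphi/\eps) \ast \sigma_\eps} \ast \sigma_\eps\right],
\end{equation*}
which, after taking logarithms and rearranging, is exactly~\eqref{eq:optimality_condition_varphi}. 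Defining $\psi$ by~\eqref{eq:psi_from_phi} makes the bracket equal to $\exp(\tau\psi/\eps)$, and~\eqref{eq:phi_from_psi} follows at once. Existence of a maximizer under $H(\mu), H(\nu) < +\infty$ can be obtained either by a direct method on $F$ (on the quotient by constants), or by invoking the equivalence with the static formulation~\eqref{def:D_eps_kanto} and the classical Schr\"odinger potential construction. Smoothness of $\varphi - \frac{\eps}{\tau}\log\nu$ is then read off~\eqref{eq:phi_from_psi}, since $\exp(\tau\psi/\eps)\ast\sigma_\eps$ is smooth and strictly positive.

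The main obstacle is the uniqueness statement. Since $F$ is invariant under $\varphi \mapsto \varphi + c$, one can at best hope for uniqueness up to constants, and the cleanest formal argument is strict concavity of $F$ on the quotient space. I would prove this by observing that any maximizer $\varphi$ together with the associated $\psi$ from~\eqref{eq:psi_from_phi} produces, via the static formulation of Proposition~\ref{prop:equivalence}, the Schr\"odinger coupling $\gamma \propto \exp(\tau(\psi(x) + \varphi(y))/\eps)\sigma_\eps(y-x)$, which is unique by strict convexity of the entropy; matching the prescribed marginals then forces any two candidate pairs $(\psi_1,\varphi_1)$, $(\psi_2,\varphi_2)$ to differ by opposite additive constants, giving uniqueness of $\varphi$ modulo a constant. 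The regularity issues in the duality step are handled a posteriori using smoothness of $\sigma_\eps$ and the existence already provided by Proposition~\ref{prop:existence}.
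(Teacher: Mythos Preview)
Your approach is essentially identical to the paper's: Lagrangian duality on the Benamou--Brenier formulation, derivation of the HJB constraint, Hopf--Cole linearization to the backward heat equation solved by convolution with $\sigma_\eps$, and a first-order computation for the fixed-point condition~\eqref{eq:optimality_condition_varphi}. The only differences are a harmless sign convention on the multiplier (your $\phi$ is the paper's $-\varphi$), and that you actually sketch a uniqueness argument via the static Schr\"odinger coupling whereas the paper explicitly declines to do so; one small point the paper makes more explicit is \emph{why} saturation in the HJB inequality is optimal (replacing a subsolution $\theta$ by the solution with the same terminal data decreases $\theta(0)$ and hence improves the objective).
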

	\begin{Rem}
		In~\eqref{eq:dual_forward}, it looks like we broke the symmetry of the problem. This is because as we will see in the sketch of proof, these formulas are obtained by looking at the dual of the Schr\"odinger problem formulated as in~\eqref{eq:rescaled_sch}. If we were starting from~\eqref{eq:Sch_backward}, we would obtain a symmetric dual problem.
	\end{Rem}
	\begin{proof}[Formal proof]
		Let us write problem~\eqref{eq:rescaled_sch} formally as an inf-sup problem, in the spirit of~\cite[Section~1.4]{baradat2021regularized}:
		\begin{align*}
		\frac{D_\eps(\mu,\nu)^2}{2\tau} = \frac{\eps}{\tau}H(\mu) + \inf_{(\rho,v)}\sup_{\varphi}& \int_0^\tau\hspace{-5pt} \int \frac{|v|^2}{2}\rho \D t \\
		&+  \cg \varphi(\tau),\nu \cd - \cg \varphi(0),\mu \cd - \int_0^\tau\cg \partial_t \varphi + v \cdot \nabla \varphi + \frac{\eps}{2\tau} \Delta \varphi, \rho\cd \D t,
		\end{align*}
		where forgetting any regularity issue, the inf is taken over functions $\rho: [0,\tau] \times \T^d \to \R_+$ and $v:[0,\tau] \times \T^d \to \R^d$, and the sup is taken over $\varphi: [0,\tau] \times \T^d \to \R$. As the functional inside the inf-sup is convex in $(\rho,\rho v)$ and concave (actually, affine) in $\varphi$, it seems legitimate to swap the inf and the sup, ending up with:
		\begin{align*}
		\frac{D_\eps(\mu,\nu)^2}{2\tau} = \frac{\eps}{\tau}H(\mu) +\sup_{\varphi}&\, \cg \varphi(\tau),\nu \cd - \cg \varphi(0),\mu \cd  \\
		&+\inf_{(\rho,v)} \int_0^\tau\hspace{-5pt} \int \frac{|v|^2}{2}\rho \D t 
		- \int_0^\tau\cg \partial_t \varphi + v \cdot \nabla \varphi + \frac{\eps}{2\tau} \Delta \varphi, \rho\cd \D t.
		\end{align*}
		Minimizing in $v$ once $\varphi$ is fixed, we find $v = \nabla \varphi$, and then minimizing in $\rho$, we find that the minimizer is $-\infty$ as soon as the inequality $\partial_t \varphi + \frac{1}{2}|\nabla \varphi|^2 + \frac{\eps}{2\tau} \Delta \varphi \leq 0$ does not hold everywhere, and cancels in the opposite case. We conclude that we have
		\begin{equation}
		\label{eq:dual_formula_varphi}
		\frac{D_\eps(\mu,\nu)^2}{2\tau} = \frac{\eps}{\tau}H(\mu) +\sup_{\substack{\varphi \mbox{ \footnotesize s.t.}\\ \partial_t \varphi + \frac{1}{2}|\nabla \varphi|^2 + \frac{\eps}{2\tau} \Delta \varphi \leq 0}} \cg \varphi(\tau),\nu \cd - \cg \varphi(0),\mu \cd .
		\end{equation}
		
		Now, given a competitor $\varphi$ satisfying the constraint, let us call 
		\begin{equation*}
		\theta \coloneq  \exp \left( \frac{\tau \varphi}{\eps} \right) \qquad \mbox{so that} \qquad \partial_t \theta + \frac{\eps}{2\tau} \Delta \theta \leq 0,
		\end{equation*}
		and the problem becomes
		\begin{equation*}
		\frac{D_\eps(\mu,\nu)^2}{2\tau} = \frac{\eps}{\tau}H(\mu) +\sup_{\substack{\theta \mbox{ \footnotesize s.t.}\\ \partial_t \theta + \frac{\eps}{2\tau} \Delta\theta \leq 0}} \left\cg \frac{\eps}{\tau} \log \theta(\tau),\nu \right\cd - \left\cg \frac{\eps}{\tau} \log \theta(0),\mu \right\cd .
		\end{equation*}
		
		Due to the sign in the constraint, given a final condition $\theta(\tau)$, we increase the value of the right-hand side by replacing $\theta(0)$ by $\theta(\tau) \ast \sigma_\eps$. But this change corresponds to considering only the equality case in the constraint, which is still admissible. This means that we do not change the value of the problem by replacing the inequality sign with an equality sign in the constraint, and we deduce
		\begin{align*}
		\frac{D_\eps(\mu,\nu)^2}{2\tau} &= \frac{\eps}{\tau}H(\mu) +\sup_{\theta: \T^d \to \R} \left\cg \frac{\eps}{\tau} \log \theta,\nu \right\cd - \left\cg \frac{\eps}{\tau} \log (\theta \ast \sigma_\eps),\mu \right\cd\notag \\
		&= \frac{\eps}{\tau}H(\mu) +\sup_{\varphi: \T^d \to \R} \left\cg \varphi,\nu \right\cd - \left\cg \frac{\eps}{\tau} \log\left( \exp\left( \frac{\tau\varphi}{\eps} \right) \ast \sigma_\eps\right),\mu \right\cd,
		\end{align*}
		where in the second line, we changed the variable according to $\varphi = \frac{\eps}{\tau}\log \theta$. This leads to~\eqref{eq:dual_forward}.
		
		The final step consists in deriving the optimality condition for $\varphi$ in this last formula. A direct first-order computation gives
		\begin{equation*}
		\nu  =  \exp\left( \frac{\tau}{\eps} \varphi \right) \left( \frac{\mu}{\exp\left( \frac{\tau}{\eps} \varphi \right)\ast \sigma_\eps} \right) \ast \sigma_\eps,
		\end{equation*}
		which implies~\eqref{eq:optimality_condition_varphi} by rearranging the terms. In particular, defining $\psi$ as in~\eqref{eq:psi_from_phi}, we get~\eqref{eq:phi_from_psi}. The regularity result follows directly from these formulas.
		
		Uniqueness is obtained by showing that~\eqref{eq:optimality_condition_varphi} admits a unique solution up to an additive constant, and we will not comment on that issue further.
	\end{proof}
	
	In the next proposition, we explicit the link between the so-called Schr\"odinger potentials $\varphi$ and $\psi$ obtained in Proposition~\ref{prop:duality}, and the optimal pairs $(\rho,v)$ and $(\rho,w)$ from Proposition~\ref{prop:existence}. These properties can be found in~\cite{leonard2013survey}.
	\begin{Prop}
		\label{prop:link_duality_BB}
		Let $\mu$, $\nu \in \P(\T^d)$ with $H(\mu)$ and $H(\nu)$ finite, and $\eps,\tau >0$. Let $(\rho,v)$ and $(\rho,w)$ be the minimizers from~\eqref{eq:rescaled_sch} and~\eqref{eq:Sch_backward} given by Proposition~\ref{prop:existence}. Let $\bar\varphi$ be a maximizer of~\eqref{eq:dual_forward} given by Proposition~\ref{prop:duality} and $\bar\psi$ defined as in~\eqref{eq:psi_from_phi}. For any $t \in [0,\tau]$, call
		\begin{equation}
		\label{eq:explicit_varphi_psi}
		\varphi(t) \coloneq  \frac{\eps}{\tau} \log \Big( \exp\left(  \frac{\tau \bar\varphi}{\eps}\right) \ast \sigma_{\frac{\eps }{\tau}(\tau - t)} \Big) \qquad \mbox{and} \qquad \psi(t) \coloneq  \frac{\eps}{\tau} \log \Big( \exp\left(  \frac{\tau \bar\psi}{\eps}\right) \ast \sigma_{\frac{\eps }{\tau}t} \Big).
		\end{equation}
		Then, we have
		\begin{align}
		\label{eq:v_is_nabla_varphi} \mbox{for almost all }t \in [0,\tau],&& v(t) = \nabla \varphi(t) \qquad &\mbox{and} \qquad w(t) = \nabla \psi(t) \\
		\label{eq:explicit_rho}\mbox{for all }t \in [0,\tau],&& \rho(t) = \exp&\left( \frac{\tau}{\eps}(\varphi(t) - \psi(t))  \right).
		\end{align}
		In particular, up to modifying $v$ and $w$ on a negligible set of times, $\rho$, $v$ and $w$ are of class $C^\infty$ on $(0,\tau) \times \T^d$.
		
		Finally, 
		\begin{equation}
		\label{eq:link_a_varphi}
		\varphi(\tau) = \bar \varphi, \qquad \psi(0) = \bar \psi,
		\end{equation}
		and $\varphi$ and $\psi$ satisfy the following identities in the classical sense on $(0,\tau) \times \T^d$
		\begin{equation}
		\label{eq:hamilton_jacobi_bellman}
		\partial_t \varphi + \frac{1}{2}|\nabla \varphi|^2 + \frac{\eps}{2\tau} \Delta \varphi = 0 \qquad \mbox{and} \qquad \partial_t \psi + \frac{1}{2}|\nabla \psi|^2 - \frac{\eps}{2\tau} \Delta \psi = 0.
		\end{equation}
	\end{Prop}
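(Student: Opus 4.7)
The strategy is to build an explicit admissible competitor for the infimum~\eqref{eq:rescaled_sch} out of the smoothed potentials $\varphi(t),\psi(t)$, show that its action matches the dual value from Proposition~\ref{prop:duality}, and identify it with $(\rho,v)$ via the uniqueness in Proposition~\ref{prop:existence}; the backward formulation~\eqref{eq:Sch_backward} is handled symmetrically to obtain the formula for $w$.

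First, by construction $\theta(t)\coloneq\exp(\tau\varphi(t)/\eps)$ is the convolution of the smooth bounded function $\exp(\tau\bar\varphi/\eps)$ with the heat kernel at scale $(\eps/\tau)(\tau-t)$; hence it is $C^\infty$ and strictly positive on $(0,\tau)\times\T^d$ and classically solves the backward heat equation $\partial_t\theta+\tfrac{\eps}{2\tau}\Delta\theta=0$. Similarly $\eta(t)\coloneq\exp(\tau\psi(t)/\eps)$ is smooth, strictly positive, and solves $\partial_t\eta-\tfrac{\eps}{2\tau}\Delta\eta=0$. A Cole--Hopf substitution on $\theta$ and $\eta$ then produces the Hamilton--Jacobi--Bellman equations~\eqref{eq:hamilton_jacobi_bellman}, and~\eqref{eq:link_a_varphi} is immediate from $\sigma_0=\delta_0$. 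This already delivers the announced $C^\infty$ regularity of $\varphi,\psi$, and ultimately of $\rho,v,w$ on $(0,\tau)\times\T^d$.

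Next, I would set $\tilde v\coloneq\nabla\varphi$, $\tilde w\coloneq\nabla\psi$, and define $\tilde\rho$ by~\eqref{eq:explicit_rho} (with the sign chosen so that~\eqref{eq:link_v_w} is automatic). The chain rule combined with the two heat equations for $\theta,\eta$ gives both $\partial_t\tilde\rho+\Div(\tilde\rho\,\tilde v)=\tfrac{\eps}{2\tau}\Delta\tilde\rho$ and $\partial_t\tilde\rho+\Div(\tilde\rho\,\tilde w)=-\tfrac{\eps}{2\tau}\Delta\tilde\rho$; and the fixed-point identities~\eqref{eq:psi_from_phi}--\eqref{eq:phi_from_psi} evaluated at $t=0,\tau$ yield $\tilde\rho(0)=\mu$ and $\tilde\rho(\tau)=\nu$. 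So $(\tilde\rho,\tilde v)$ is admissible in~\eqref{eq:rescaled_sch} and $(\tilde\rho,\tilde w)$ in~\eqref{eq:Sch_backward}.

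Finally, to identify $\tilde v$ with the minimizer $v$, I would use the HJB equation to substitute $\tfrac12|\nabla\varphi|^2=-\partial_t\varphi-\tfrac{\eps}{2\tau}\Delta\varphi$ inside $\int_0^\tau\!\!\int \tfrac12|\tilde v|^2\tilde\rho\,\D t$ and integrate by parts in space and time against $\tilde\rho$ using the continuity equation derived above; all bulk terms cancel and only the boundary contribution $\langle\varphi(\tau),\nu\rangle-\langle\varphi(0),\mu\rangle$ survives. Adding $\tfrac{\eps}{\tau}H(\mu)$ reproduces the dual value~\eqref{eq:dual_forward} at the maximizer $\bar\varphi$; hence $(\tilde\rho,\tilde v)$ is optimal in~\eqref{eq:rescaled_sch}, and the uniqueness in Proposition~\ref{prop:existence} forces $(\rho,v)=(\tilde\rho,\tilde v)$. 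The analogous argument based on~\eqref{eq:Sch_backward} and~\eqref{eq:Sch_backward}'s dual yields $w=\nabla\psi$. The main obstacle is keeping this integration by parts rigorous: the time-boundary pairings $\langle\varphi(0),\mu\rangle$ and $\langle\varphi(\tau),\nu\rangle$ must be finite, which follows from the hypothesis $H(\mu),H(\nu)<+\infty$ through the classical entropy--log duality estimate $\int|\log\rho|\,\D\sigma\lesssim H(\sigma)+1$; spatial boundary terms are absent since $\T^d$ has no boundary.
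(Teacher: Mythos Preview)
Your proposal is correct and follows a genuinely different route from the paper's formal proof. The paper argues ``top--down'': it invokes the saddle-point structure established in the proof of Proposition~\ref{prop:duality} to read off directly that the optimal $v$ equals $\nabla\varphi$ and that $\varphi$ solves~\eqref{eq:hamilton_jacobi_bellman}, and then derives the product formula~\eqref{eq:explicit_rho} by showing that $\varphi-\psi-\tfrac{\eps}{\tau}\log\rho$ is constant along the flow (its time derivative vanishes thanks to~\eqref{eq:hamilton_jacobi_bellman} and the continuity equation, and it vanishes at $t=\tau$). You instead work ``bottom--up'': you build the candidate $(\tilde\rho,\tilde v)$ explicitly from the potentials, check admissibility, and show by an HJB substitution plus integration by parts that its action equals the dual value, so that uniqueness in Proposition~\ref{prop:existence} forces the identification. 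Your route is more self-contained---it does not rely on the formal inf--sup swap being rigorous---at the price of a slightly longer computation; the paper's route is shorter but leans on the saddle-point heuristics of the previous proof. One small inaccuracy: you call $\exp(\tau\bar\varphi/\eps)$ ``smooth and bounded'', but Proposition~\ref{prop:duality} only gives that $\bar\varphi-\tfrac{\eps}{\tau}\log\nu$ is smooth, so $\exp(\tau\bar\varphi/\eps)=\nu\cdot(\text{smooth positive})$ is merely in $L^1(\T^d)$; this is still enough for the heat convolution to produce a smooth positive $\theta$ on $(0,\tau)\times\T^d$, so the argument is unaffected.
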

	
	\begin{proof}[Formal proof]
		The arguments of the previous proof show that if $(\rho,v)$ is optimal in~\eqref{eq:rescaled_sch}, $\varphi$ is an optimizer in~\eqref{eq:dual_formula_varphi} and $\bar \varphi$ is the corresponding optimizer of~\eqref{eq:dual_forward}, then:
		\begin{itemize}
			\item  $v = \nabla \varphi$,
			\item $\varphi$ is a solution of the first equation in~\eqref{eq:hamilton_jacobi_bellman},
			\item the auxiliary function $\theta \coloneq  \exp(\frac{\tau}{\eps}\varphi)$ solves $\partial_t \theta + \frac{\eps}{2\tau}\Delta \theta = 0$,
			\item $\bar \varphi = \varphi(\tau)$.
		\end{itemize}  
		Therefore, the first equations in~\eqref{eq:explicit_varphi_psi},~\eqref{eq:v_is_nabla_varphi},~\eqref{eq:link_a_varphi} and~\eqref{eq:hamilton_jacobi_bellman} are proved. The second equations -- concerning $\psi$ -- are proved in the same way inverting the direction of time. Finally, to prove~\eqref{eq:explicit_rho}, observe that because of~\eqref{eq:link_v_w} and~\eqref{eq:v_is_nabla_varphi}, we have
		\begin{equation*}
		\partial_t \rho + \Div\left(\rho \nabla\left( \frac{\varphi + \psi}{2} \right) \right) = 0 \quad \mbox{and hence}\quad \partial_t \log \rho + \nabla \log \rho \cdot \nabla \left( \frac{\varphi + \psi}{2} \right) + \Delta \left( \frac{\varphi + \psi}{2} \right) = 0.
		\end{equation*}  
		Therefore, using this PDE and~\eqref{eq:hamilton_jacobi_bellman}, a quick computation shows
		\begin{align*}
		\partial_t \left( \varphi - \psi - \frac{\eps}{\tau} \log \rho \right) &= -\frac{1}{2} \left\{ |\nabla\varphi |^2 - |\nabla \psi|^2 \right\} + \frac{\eps}{\tau} \nabla \log \rho \cdot \nabla \left( \frac{\varphi + \psi}{2} \right)\\
		&= - \nabla \left( \frac{\varphi + \psi}{2} \right) \cdot \nabla \left( \varphi - \psi - \frac{\eps}{\tau} \log \rho \right)=0,
		\end{align*}
		where we used that, by~\eqref{eq:link_v_w} and~\eqref{eq:v_is_nabla_varphi}, we have $\nabla(\varphi - \psi - \frac{\eps}{\tau} \log \rho) = 0$. We conclude by observing that at time $t = \tau$, $\varphi(\tau) - \psi(\tau) - \frac{\eps}{\tau} \log \rho(\tau) \equiv 0$ as a consequence of~\eqref{eq:explicit_varphi_psi},~\eqref{eq:link_a_varphi} and~\eqref{eq:phi_from_psi}.
	\end{proof}
	
	As a corollary of this proposition, several evolution PDEs can be derived for relevant quantities of the problem. Here is a list of some that will be useful for us.
	
	\begin{Cor}
		\label{cor:fluid}
		In the situation of Proposition~\ref{prop:link_duality_BB}, calling $\vec m \coloneq  \rho \nabla \varphi$ and $\cev m \coloneq  \rho \nabla \psi$ the so called forward and backward momentums, the following identities hold classicaly in $(0,\tau) \times \T^d$:
		\begin{equation}
		\label{eq:momentum}
		\partial_t \vec m + \bDiv(\vec m \otimes \nabla \psi) + \frac{\eps}{2\tau}\Delta\vec m = 0,
		\qquad
		\partial_t \cev m + \bDiv(\cev m \otimes \nabla \varphi) - \frac{\eps}{2\tau} \Delta\cev m = 0
		\end{equation} 
		(here the notation $\bDiv$ stand for the divergence of a matrix field, which provides a vector field as a result).
		Moreover, denoting by $\vec e = \rho|\nabla \varphi|^2/2$ and $\cev e = \rho|\nabla \psi|^2/2 $ the so-called forward and backward local energies, we have classically in $(0,\tau) \times \T^d$ 
		\begin{equation}
		\label{eq:PDE_kinetic}
		\partial_t \vec e + \Div (\vec e \nabla \psi ) + \frac{\eps}{2\tau} \Delta \vec e = \frac{1}{2} \rho |\DD\varphi|^2, \qquad  \partial_t \cev e + \Div (\cev e \nabla \varphi ) - \frac{\eps}{2\tau} \Delta \cev e = -\frac{1}{2} \rho |\DD\psi|^2.
		\end{equation}
	\end{Cor}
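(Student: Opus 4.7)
Proof proposal. The plan is to derive both equations in~\eqref{eq:momentum} and~\eqref{eq:PDE_kinetic} by straightforward pointwise computation, using only three ingredients furnished by Proposition~\ref{prop:link_duality_BB}: the smoothness of $\rho,\varphi,\psi$ on $(0,\tau)\times\T^d$; the two Hamilton--Jacobi--Bellman equations~\eqref{eq:hamilton_jacobi_bellman}; and the key algebraic identity
\begin{equation*}
\nabla\varphi - \nabla\psi = \frac{\eps}{\tau}\nabla\log\rho,
\end{equation*}
which is the gradient of the relation~\eqref{eq:explicit_rho}. From this last identity and the forward continuity equation $\partial_t\rho + \Div(\rho\nabla\varphi) = \frac{\eps}{2\tau}\Delta\rho$ (given by $v=\nabla\varphi$ in~\eqref{eq:rescaled_sch}) one immediately recovers the backward continuity equation $\partial_t\rho + \Div(\rho\nabla\psi) = -\frac{\eps}{2\tau}\Delta\rho$, and both will be used as needed.

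For the forward momentum equation in~\eqref{eq:momentum}, I would write $\vec m_i = \rho\,\partial_i\varphi$, differentiate in time, and substitute $\partial_i\partial_t\varphi$ using the first HJB equation in~\eqref{eq:hamilton_jacobi_bellman}. Next I expand $\partial_j(\vec m_i\,\partial_j\psi)$ and $\Delta\vec m_i$ by the product and Leibniz rules. In the resulting sum, the terms proportional to $\partial_i\Delta\varphi$ cancel between the HJB substitution and the Laplacian of $\vec m$. The remaining terms regroup naturally as $\partial_i\varphi$ times $\bigl[\partial_t\rho + \Div(\rho\nabla\psi) + \frac{\eps}{2\tau}\Delta\rho\bigr]$, which vanishes by the backward continuity equation, plus a combination involving $\rho\,\partial_i\partial_j\varphi\,(\partial_j\psi-\partial_j\varphi)$ and $\frac{\eps}{\tau}\partial_j\rho\,\partial_i\partial_j\varphi$, which cancel thanks to the displayed gradient identity. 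The backward momentum equation then follows by the time-reversal symmetry encoded in~\eqref{eq:Sch_backward}--\eqref{eq:Sch_symmetric}: swapping $t\mapsto\tau-t$ exchanges the roles of $\varphi$ and $\psi$ and reverses the sign of the Laplacian term.

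The local energy identities in~\eqref{eq:PDE_kinetic} are obtained along the same lines, writing $\vec e = \tfrac{1}{2}\rho|\nabla\varphi|^2$, applying $\partial_t$, and using the HJB equation for $\varphi$ to replace $\nabla\varphi\cdot\nabla\partial_t\varphi$ by $-\nabla\varphi\cdot\nabla\bigl(\tfrac{1}{2}|\nabla\varphi|^2\bigr) - \frac{\eps}{2\tau}\nabla\varphi\cdot\nabla\Delta\varphi$. Expanding $\Div(\vec e\,\nabla\psi)$ and $\Delta\vec e$, the bracket $\partial_t\rho+\Div(\rho\nabla\psi)+\frac{\eps}{2\tau}\Delta\rho$ again appears factorized by $|\nabla\varphi|^2/2$ and vanishes, while the Bochner-type identity $\Delta\bigl(\tfrac{1}{2}|\nabla\varphi|^2\bigr) = \nabla\varphi\cdot\nabla\Delta\varphi + |\DD\varphi|^2$ produces exactly the source term $\tfrac{1}{2}\rho|\DD\varphi|^2$ on the right-hand side after using the gradient identity once more to absorb the leftover $\partial_j\rho$ cross-terms. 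The backward energy equation follows by the same symmetry argument. The main obstacle is not conceptual but purely bookkeeping: keeping track of the many quadratic cross-terms and recognizing the backward continuity equation as the hidden cancellation mechanism; the right choice of transport velocity ($\nabla\psi$ for the forward objects, $\nabla\varphi$ for the backward ones) together with the sign of the Laplacian is precisely what makes everything collapse.
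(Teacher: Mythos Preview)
Your proposal is correct and follows essentially the same approach as the paper: a direct pointwise computation using the HJB equations~\eqref{eq:hamilton_jacobi_bellman}, the backward continuity equation, and the gradient identity $\nabla\varphi-\nabla\psi=\frac{\eps}{\tau}\nabla\log\rho$. The only organizational difference is that for~\eqref{eq:PDE_kinetic} the paper writes $\vec e=\tfrac12\vec m\cdot\nabla\varphi$ and recycles the already-proved momentum equation~\eqref{eq:momentum}, whereas you expand $\vec e=\tfrac12\rho|\nabla\varphi|^2$ directly and invoke the Bochner identity; both routes lead to the same cancellations.
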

	\begin{proof}
		This proof is only algebraic. Recall that all the following PDEs hold in a classical sense on $(0,\tau) \times \T^d$, and that all the quantities involved are of class $C^\infty$ because of~\eqref{eq:explicit_varphi_psi} and~\eqref{eq:explicit_rho}:
		\begin{equation*}
		\left\{ \begin{gathered}
		\partial_t \rho + \Div(\rho \nabla \varphi) - \frac{\eps}{2\tau} \Delta \rho = 0,\\
		\partial_t \varphi + \frac{1}{2} |\nabla \varphi|^2 + \frac{\eps}{2\tau} \Delta \varphi = 0,
		\end{gathered} \right.
		\qquad  \left\{ \begin{gathered}
		\partial_t \rho + \Div(\rho \nabla \psi) +\frac{\eps}{2\tau} \Delta \rho =0,\\
		\partial_t \psi + \frac{1}{2} |\nabla \psi|^2 -\frac{\eps}{2\tau} \Delta \psi = 0.
		\end{gathered} \right.
		\end{equation*}
		Therefore, we have
		\begin{align*}
		\partial_t \vec m + \bDiv(\vec m \otimes \nabla \psi) &= \partial_t(\rho \nabla \varphi) + \bDiv(\nabla \varphi \otimes (\rho \nabla \psi)) \\
		&= \Big( \partial_t \rho + \Div (\rho \nabla \psi) \Big) \nabla \varphi + \rho \Big( \nabla( \partial_t \varphi) + \DD\varphi \cdot \nabla \psi \Big)\\
		&= - \frac{\eps}{2\tau} \Delta \rho \nabla \varphi+ \rho \Big( \nabla( \partial_t \varphi) + \DD\varphi \cdot \nabla \psi \Big).
		\end{align*}
		Recalling that
		\begin{equation*}
		\nabla \psi = \nabla \varphi - \frac{\eps}{\tau}\nabla \log \rho,
		\end{equation*}
		we deduce that
		\begin{align*}
		\partial_t \vec m + \bDiv(\vec m \otimes \nabla \psi) &=  - \frac{\eps}{2\tau} \Delta \rho \nabla \varphi+\rho \Big( \nabla\Big( \partial_t \varphi + \frac{1}{2} |\nabla \varphi|^2\Big) - \frac{\eps}{\tau}\DD\varphi \cdot \nabla \log \rho \Big)\\
		&= - \frac{\eps}{2\tau}\Big( \Delta \rho \nabla \varphi + 2 \DD \varphi \cdot \nabla \rho + \rho \nabla \Delta \varphi\Big) = - \frac{\eps}{2\tau} \Delta \vec m,
		\end{align*}
		which is the first equation in~\eqref{eq:momentum}. The second one is proved in the same way.
		
		The same computations let us compute:
		\begin{align*}
		\partial_t \vec e + \Div (\vec e \nabla \psi ) &= \frac{1}{2} \Big( \partial_t (\vec m \cdot \nabla \varphi) + \Div \big((\nabla \psi \otimes \vec m) \cdot \nabla \varphi\big) \Big)\\
		&=\frac{1}{2} \Big( \partial_t \vec m + \bDiv(\vec m \otimes \nabla \psi) \Big)\cdot \nabla \varphi +  \vec m\cdot\Big( \nabla (\partial_t \varphi) + \DD \varphi \cdot \nabla \psi \Big)\\
		&= - \frac{\eps}{4\tau}\Delta \vec m \cdot \nabla \varphi + \vec m \cdot \Big( \nabla\Big(  \partial_t \varphi + \frac{1}{2} |\nabla \varphi|^2 \Big) - \frac{\eps}{\tau} \DD \varphi \cdot\nabla \log \rho \Big)\\
		&= - \frac{\eps}{4\tau} \Big( \Delta \vec m \cdot \nabla \varphi + 2 \vec m \cdot (\DD \varphi \cdot \nabla \log \rho) + \vec m \cdot \nabla \Delta \varphi  \Big).
		\end{align*}
		On the other hand,
		\begin{equation*}
		\Delta \vec e = \frac{1}{2}\Delta \vec m \cdot \nabla \varphi + \tr(\DD \varphi\cdot \mathrm D \vec m) + \frac{1}{2}\vec m \cdot \nabla \Delta \varphi.
		\end{equation*}
		Therefore, also using that $\vec m = \rho \nabla \varphi$ and hence $\mathrm D \vec m= \rho \DD \varphi + \nabla \varphi \otimes \nabla \rho$, we find
		\begin{align*}
		\partial_t \vec e + \Div (\vec e \nabla \psi ) + \frac{\eps}{2\tau}\Delta \vec e &= \frac{\eps}{2\tau}\Big( \tr(\DD \varphi \cdot \mathrm D \vec m) - \vec m \cdot(\DD \varphi \cdot \nabla \log \rho) \Big) \\
		&= \frac{\eps}{2\tau}\tr\Big(\DD \varphi \cdot \{ \mathrm D \vec m - \vec m \otimes\nabla \log \rho \}\Big)\\
		&=\frac{\eps}{2\tau}\tr\Big(\DD \varphi \cdot \{ \mathrm D \vec m - \nabla \varphi \otimes \nabla \rho  \}\Big) = \frac{\eps}{2\tau}\rho |\DD \varphi|^2.
		\end{align*}
		This is the first equation in~\eqref{eq:PDE_kinetic}, and the second one follows the same lines.
	\end{proof} 
	
	The final result of this section is a keystone when deriving the optimality conditions of one step of the regularized JKO scheme. It essentially asserts that the subdifferential of the convex functional $\nu \mapsto D_\eps(\mu,\nu)^2$ is the set of maximizers in~\eqref{eq:dual_forward}. We provide a detailed proof of this result.
	
	\begin{Prop}
		\label{prop:right_derivative_sch}
		Let $\mu,\nu\in \P(\T^d)$ with $H(\mu)$ and $H(\nu)$ finite, and let $\eps,\tau>0$. Finally, let $\varphi$ be a maximizer of~\eqref{eq:dual_forward}.
		
		\begin{itemize}
			\item If $\nu'\in \P(\T^d)$ is not absolutely continuous with respect to $\nu$ and $H(\nu')< + \infty$, then calling $\nu_u \coloneq  (1-u) \nu + u \nu'$ for $u\in[0,1]$, we have:
			\begin{equation*}
			\frac{\D }{\D u+}\frac{D_\eps(\mu,\nu_u)^2}{2\tau}\bigg|_{u=0} = -\infty.
			\end{equation*} 
			\item If $\zeta \in L^\infty(\nu)$ and $\int \zeta \D \nu = 0$, then calling $\nu_u \coloneq  (1 + u \zeta) \nu$ for $u \in \R$ with $|u|$ small enough, we have:
			\begin{equation*}
			\frac{\D }{\D u}\frac{D_\eps(\mu,\nu_u)^2}{2\tau}\bigg|_{u=0} = \int \varphi \zeta \nu.
			\end{equation*} 
		\end{itemize}
	\end{Prop}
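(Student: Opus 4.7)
I would treat the two parts of the proposition separately, in each case selecting the formulation of the Schr\"odinger cost that best isolates the relevant information.

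For the first part, I would use the backward formulation~\eqref{eq:Sch_backward}, which decomposes the cost as $\frac{D_\eps(\mu,\nu)^2}{2\tau} = \frac{\eps}{\tau} H(\nu) + A_b(\nu)$, where $A_b(\nu)$ denotes the remaining infimum in $(\rho, \rho w)$. As the infimum of a convex functional over a constraint set depending affinely on $\nu$, $A_b$ is convex in its endpoint, so $A_b(\nu_u) \leq A_b(\nu) + u(A_b(\nu') - A_b(\nu))$, the finiteness of $A_b(\nu')$ following from Proposition~\ref{prop:existence} together with $H(\mu), H(\nu') < \infty$. For the entropy contribution, identifying measures with their densities and setting $c := \nu'(\{\nu = 0\})$, non-absolute continuity of $\nu'$ with respect to $\nu$ gives $c > 0$. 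Splitting the integral defining $H(\nu_u)$ according to whether the density of $\nu$ vanishes or not, the contribution from $\{\nu = 0\}$ is exactly $u\log u \cdot c + u \int_{\{\nu=0\}} \nu' \log \nu' \D x$, while convexity of $r \mapsto r \log r$ yields $\int_{\{\nu>0\}} \nu_u \log \nu_u \D x \leq (1-u) H(\nu) + u \int_{\{\nu>0\}} \nu' \log \nu' \D x$. Summing gives $H(\nu_u) - H(\nu) \leq c u \log u + O(u)$, hence $\frac{D_\eps(\mu,\nu_u)^2}{2\tau} - \frac{D_\eps(\mu,\nu)^2}{2\tau} \leq \frac{\eps c}{\tau} u\log u + O(u)$; dividing by $u > 0$ yields the claimed $-\infty$ limit.

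For the second part, I would exploit the dual formulation of Proposition~\ref{prop:duality}, in which $F(\nu) := \frac{D_\eps(\mu, \nu)^2}{2\tau}$ is the pointwise supremum over $\varphi$ of the affine-in-$\nu$ functionals $G_\varphi(\nu) := \frac{\eps}{\tau} H(\mu) + \langle \varphi, \nu\rangle - \langle \frac{\eps}{\tau} \log(e^{\tau\varphi/\eps} \ast \sigma_\eps), \mu\rangle$. Since $F(\nu) = G_\varphi(\nu)$ for the optimal $\varphi$ while $F \geq G_\varphi$ holds for every competitor, evaluating at $\nu_u$ gives the one-sided bound $F(\nu_u) - F(\nu) \geq u \int \varphi \zeta \D\nu$, yielding right derivative $\geq \int \varphi \zeta \D\nu$ and left derivative $\leq \int \varphi \zeta \D\nu$. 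For the matching bound on the right derivative, I would take the optimal potential $\varphi_u$ at $\nu_u$ and use $F(\nu_u) = G_{\varphi_u}(\nu_u)$ together with $F(\nu) \geq G_{\varphi_u}(\nu)$ to deduce $F(\nu_u) - F(\nu) \leq u \int \varphi_u \zeta \D\nu$.

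The main obstacle is then the stability $\int \varphi_u \zeta \D\nu \to \int \varphi \zeta \D\nu$ as $u \to 0$. Since $\zeta \in L^\infty(\nu)$, for small $|u|$ the measures $\nu_u$ and $\nu$ share the same support with density ratio squeezed between $1 - |u|\|\zeta\|_{L^\infty(\nu)}$ and $1 + |u|\|\zeta\|_{L^\infty(\nu)}$, so $\log \nu_u - \log \nu \to 0$ uniformly on $\mathrm{supp}(\nu)$. Normalizing $\varphi_u$ by a condition such as $\int \varphi_u \D\nu = 0$ removes the additive ambiguity, and the fixed-point equation~\eqref{eq:optimality_condition_varphi} applied to $\varphi_u$ lets one extract uniform bounds on $\varphi_u - \frac{\eps}{\tau}\log \nu_u$ together with compactness; passing to the limit and invoking the uniqueness part of Proposition~\ref{prop:duality} forces $\varphi_u \to \varphi$ strongly enough to integrate against $\zeta\nu$, which is a signed measure of total mass zero so the chosen normalization drops out. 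Combining this with the one-sided bounds above yields the common value $\int \varphi \zeta \D\nu$ for both the right and left derivatives, concluding the proof.
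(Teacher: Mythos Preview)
Your proposal is correct and follows essentially the same strategy as the paper's proof: for the first point you isolate the entropy of $\nu$ via the backward formulation and use convexity of the remaining part (the paper does the same using Proposition~\ref{prop:sch_convex} and Corollary~\ref{cor:sym}), then show the right derivative of $H(\nu_u)$ is $-\infty$; for the second point you sandwich $F(\nu_u)-F(\nu)$ between $u\langle\varphi,\zeta\nu\rangle$ and $u\langle\varphi_u,\zeta\nu\rangle$ via the dual formulation and then prove stability of $\varphi_u$ through the fixed-point equation~\eqref{eq:optimality_condition_varphi} and the uniqueness statement of Proposition~\ref{prop:duality}, exactly as the paper does. The only cosmetic differences are that you compute the entropy derivative by an explicit splitting on $\{\nu=0\}$ and $\{\nu>0\}$ (the paper uses monotone convergence on the difference quotient directly), and you normalize $\varphi_u$ by $\int\varphi_u\,\D\nu=0$ whereas the paper takes $\int\exp(\tau\varphi_u/\eps)=1$; the latter choice makes the compactness extraction from~\eqref{eq:optimality_condition_varphi} slightly more immediate since $(\exp(\tau\varphi_u/\eps))_u$ is then automatically tight in $\P(\T^d)$, but either normalization works.
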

	\begin{proof}
		\underline{First point}. Let us call $(\rho,w)$ and $(\rho',w')$ the optimizers for problem~\eqref{eq:Sch_backward} between $\mu$ and $\nu$, and $\mu$ and $\nu'$ respectively as given by Proposition~\ref{prop:existence}. By Proposition~\ref{prop:sch_convex} and Corollary~\ref{cor:sym}, we easily see that for all $u\in[0,1]$,
		\begin{equation*}
		\frac{D_\eps(\mu,\nu_u)^2}{2\tau} \leq \frac{\eps}{\tau} H(\nu_u) + (1-u)\left\{ \frac{D_\eps(\mu,\nu)^2}{2\tau} - \frac{\eps}{\tau} H(\nu) \right\} + u\left\{\frac{D_\eps(\mu,\nu')^2}{2\tau} - \frac{\eps}{\tau} H(\nu') \right\}.
		\end{equation*}
		Rearranging the terms and neglecting terms with the right sign, we find for all $u \in (0,1]$:
		\begin{equation*}
		\frac{D_\eps(\mu,\nu_u)^2 - D_\eps(\mu,\nu)^2}{2\tau u} \leq \frac{\eps}{\tau}  \frac{H(\nu_u) - H(\nu)}{u} + \frac{\eps}{\tau}  H(\nu) + \frac{D_\eps(\mu,\nu')^2}{2\tau}.
		\end{equation*} 
		Therefore, we only need to show that
		\begin{equation*}
		\frac{\D}{\D u+}H(\nu_u)\bigg|_{u=0} = - \infty.
		\end{equation*}
		This is an easy consequence of the monotone convergence theorem since 
		\begin{equation*}
		u \longmapsto \frac{\nu_u \log \nu_u - \nu \log \nu}{u}
		\end{equation*}
		is nonincreasing and converges to $-\infty$ on the set $\{ \nu'>0\} \cap \{\nu = 0\}$, which is of positive Lebesgue measure.
		
		\bigskip
		
		\noindent \underline{Second point}. Take $u \in \R$ sufficiently small so that we have $(1 + u\zeta) \nu \in \P(\T^d)$ (i.e. $1+u\zeta\geq 0$). By Proposition~\ref{prop:duality}, we have
		\begin{align*}
		\frac{D_\eps(\mu,\nu_u)^2}{2\tau} &\geq  \frac{\eps}{\tau} H(\mu) + \Big\cg \varphi, \nu_u \Big\cd - \Big\cg \frac{\eps}{\tau} \log \Big( \left\{ \exp\left(\frac{\tau \varphi}{\eps}\right) \right\}\ast \sigma_{\eps} \Big), \mu \Big\cd\\
		&= \frac{D_\eps(\mu,\nu)^2}{2\tau} +  \Big\cg \varphi , \nu_u - \nu \Big\cd = \frac{D_\eps(\mu,\nu)^2}{2\tau} + u \Big\cg \varphi , \zeta \nu \Big\cd.
		\end{align*}
		On the other hand, let us call $\varphi_u$ the optimizer in~\eqref{eq:dual_forward} with $\nu_u$ in place of $\nu$, chosen so that $\int \exp( \frac{\tau\varphi_u}{\eps} )= 1$ (recall that it is unique up to an additive constant). First, let us prove that $\varphi_u$ converges to $\varphi$ as $u \to 0$ up to an additive constant. Indeed, the optimality condition~\eqref{eq:optimality_condition_varphi} for $\varphi_u$ writes:
		\begin{equation}
		\label{eq:opt_varphi_u}
		\varphi_u = \frac{\eps}{\tau} \log \nu_u  - \frac{\eps}{\tau}\log \left( \left\{ \frac{\mu}{\exp\left( \frac{\tau}{\eps} \varphi_u \right)\ast \sigma_\eps} \right\} \ast \sigma_\eps \right).
		\end{equation}
		Yet, $(\exp(\frac{\tau}{\eps} \varphi_u))_{u}$ is precompact in the narrow topology of $\P(\T^d)$, so  $(\exp(\frac{\tau}{\eps} \varphi_u)\ast \sigma_\eps)_{u}$ is precompact in $C(\T^d)$ with a uniform below bound, and so the second term in the right-hand side of~\eqref{eq:opt_varphi_u} is precompact in $C(\T^d)$. Also, the first term in the right hand side of~\eqref{eq:opt_varphi_u} clearly converges to $\frac{\eps}{\tau} \log \nu$ in $L^1(\T^d)$. In turn, $(\varphi_u)_u$ is precompact in $L^1(\T^d)$. Let $\bar \varphi$ be a corresponding limit point. Passing to the limit in~\eqref{eq:opt_varphi_u}, we deduce that $\bar \varphi$ satisfies the optimality condition~\eqref{eq:optimality_condition_varphi} associated with the marginals $(\mu,\nu)$ and by uniqueness, $\varphi - \bar \varphi$ is constant.
		
		Now, the same argument as before provides
		\begin{equation*}
		\frac{D_\eps(\mu,\nu_u)^2}{2\tau} \leq \frac{D_\eps(\mu,\nu)^2}{2\tau} + u \Big\cg \varphi_u , \zeta \nu \Big\cd.
		\end{equation*}
		Using~\eqref{eq:opt_varphi_u} makes it easy to show
		\begin{equation*}
		\Big\cg\varphi_u, \zeta \nu \Big\cd \underset{u \to 0}{\longrightarrow} \Big\cg \bar \varphi , \zeta \nu\Big \cd = \Big\cg  \varphi , \zeta \nu\Big \cd,
		\end{equation*}
		where the last equality follows from $\int \zeta \nu = 0$ and the fact that $\varphi-\bar \varphi$ is constant.
		
		All in all,
		\begin{equation*}
		\frac{D_\eps(\mu,\nu)^2}{2\tau} + u \Big\cg \varphi , \zeta \nu \Big\cd \leq 	\frac{D_\eps(\mu,\nu_u)^2}{2\tau} \leq \frac{D_\eps(\mu,\nu)^2}{2\tau} + u \Big\cg \varphi , \zeta \nu \Big\cd + \underset{u \to 0}{o}(u),
		\end{equation*}
		and the result follows.
	\end{proof}

	\section{Proof of Theorem~\ref{thm:main}}
	\label{sec:proof}
	
	Let $(\bar \rho^k)$ be a piecewise constant curve as in the statement of the theorem, and $T>0$ is a finite time horizon given once for all. For the sake of simplicity, we will assume that for all $k \in \N$, $T$ is an integer multiple of $\tau_k$, but the proof is easy to adapt otherwise. 
    
    It will often be convenient to work with another curve than $\rho^k$ valued in $\P(\T^d)$, interpolating between the sequence defined through the JKO scheme. We define this interpolation as follows. Let $k,n \in \N$ and $(\mu,\xi) = (\mu(s), \xi(s))$, $s \in [0,\tau_k]$ be the optimal pair in~\eqref{eq:rescaled_sch} as given by Proposition~\ref{prop:existence} with $\eps = \eps_k$ and $\tau = \tau_k$, between $\rho^k_n$ and $\rho^k_{n+1}$. For all $s \in (0,\tau_k]$, we define
	\begin{equation*}
	\rho^k(n\tau_k + s) \coloneq  \mu(s) \qquad \mbox{and} \qquad v^k(n \tau_k + s) \coloneq  \xi(s).
	\end{equation*}
	By definition, it is clear that for every $k \in \N$ we have:
	\begin{itemize}
		\item By Proposition~\ref{prop:existence}, the curve $ \rho^k$ belongs to $AC^2([0,T]; \P(\T^d))$ and so in particular is continuous.
		\item Distributionally, on $(0,T) \times \T^d$, recalling that $\alpha_k = \eps_k/\tau_k$,
		\begin{equation}
		\label{eq:evolution_PDE_rhok}
		\partial_t  \rho^k + \Div ( \rho^k  v^k) = \frac{\alpha_k}{2} \Delta  \rho^k.
		\end{equation}
		(This equation is obviously satisfied on sets of type $(n\tau_k, (n+1)\tau_k)$, $n \in \N$, and we deduce that it is satisfied in the whole $(0,T) \times \T^d$ using continuity of $\rho^k$.)
		\item For all $n \in \N$,
		\begin{equation*}
		\rho^k(n\tau_k) = \bar \rho^k(n\tau_k) = \rho^k_n.
		\end{equation*}
	\end{itemize}
	
	From now on, the proof is divided into several steps following quite closely the proof for the classical JKO scheme (see~\cite[Section~8.3]{santambrogio2015optimal} or~\cite[Section~4.4]{santambrogio2017overview}), with two really new ideas. In Steps 1 and 2, we derive and exploit estimates showing that $(\rho^k)_{k \in \N}$ is precompact in the uniform topology of $C([0,T]; \P(\T^d))$. This is made by comparing $\rho^k$ between two time steps with solutions of the heat equation, which is the first novelty of our proof. Then, in Steps 3 to 6, we prove the distributional convergence~\eqref{eq:distributional_convergence}. The main arguments to this convergence are the derivation of the optimality conditions~\eqref{eq:optimality_condition_rhokn} for each step of the problem and a quantitative study of the continuity of the momentums $\rho^k v^k$ in some adapted topology. Finally, Steps 7 to 9 are devoted to proving that every term in~\eqref{eq:distributional_convergence} converges as $k \to 0$. To this aim, the only difficulty is to show the convergence
	\begin{equation*}
	g(\bar \rho^k) \underset{k \to + \infty}{\longrightarrow} g(\rho).
	\end{equation*}
	This is done thanks to the estimate~\eqref{eq:main_compactness_estimate} that we will introduce later, which ensures strong compactness in space for $\rho^k$. This estimate is a consequence of some monotonicity property of the kinetic energy along Schr\"odinger bridges, the second main new ingredient of the proof. We conclude with a classical argument of the Aubin-Lions type that we decided to deduce directly from~\cite{rossi2003tightness}.

	\bigskip
	
	\noindent \underline{Step 1}. A bound for $v^k$ in $L^2(\rho^k)$.
	
	Let us prove that $(\rho^k)$ is precompact in the uniform topology of $C([0,T]; \P(\T^d))$. As in the non-regularized case, this compactness relies on the fact that the kinetic action along one time step is controlled by the dissipation of the energy functional during this time step. The purpose of this step is to derive this control. We will deduce compactness at Step~2 below.
	
	Our bound is obtained by comparing $(\rho^k,v^k)$ to another competitor at each step, and hence it does not use the optimality conditions for the Schr\"odinger problem. Therefore, at this step, we will only use the Propositions~\ref{prop:equivalence} and~\ref{prop:existence}.

	In the classical case, compactness is obtained by considering stationary curves as competitors. Here, this is done by taking solutions of the heat equation.
	For given $k,n \in \N$ and $s \in (0,\tau_k]$, let us call $\mu(s) \coloneq  \rho^k_n \ast \sigma_{\alpha_k s}$ and $\xi(s) \coloneq  0$. Comparing $\rho^k_n \ast \sigma_{\tau_k}$ to $\rho^k_{n+1}$ in the optimization problem~\eqref{eq:entropic_JKO} and then using $(\mu,\xi)$ as a competitor in~\eqref{eq:rescaled_sch} for computing $D_{\eps_k}(\rho^k_n, \rho^k_n \ast\sigma_{\tau_k})$, we find
	\begin{equation}
	\label{eq:first_dissipation_inequality}
	\int_{n\tau_k}^{(n+1)\tau_k} \hspace{-5pt} \int \frac{| v^k|^2}{2} \rho^k \D s + \mathcal F(\rho^k_{n+1}) \leq \mathcal F(\rho^k_n \ast\sigma_{\tau_k}).
	\end{equation}
	(Observe that the initial entropic term from~\eqref{eq:rescaled_sch} is the same on both sides of this inequality, and hence simplifies.)
	
	On the other hand, for all $\rho \in \P(\T^d)$ with $\mathcal F(\rho) < + \infty$ and $t\geq 0$, we have
	\begin{align*}
	\mathcal F(\rho \ast \sigma_t) - \mathcal F(\rho) &\leq \int \{V \ast \sigma_t - V \} \rho +   \int \Big\{ (  W \ast \sigma_t -  W )\ast \rho \Big\} \rho \ast \sigma_t + \int \{ f(\rho \ast \sigma_t) - f(\rho) \} \D x \\
	&\leq \sup \{ V \ast \sigma_t - V \} + \sup \{ ( W  \ast \sigma_t -  W \}
	\end{align*}
	(where we used Jensen's inequality to bound the last term of the first line.) But now, it is clear that for all $C^2$ function $A$ on $\T^d$ (actually $(\Delta A)_+ \in L^\infty$ is enough), there exists $C>0$ such that
	\begin{equation*}
	\sup \{ A \ast \sigma_t - A\} \leq C t, \qquad \mbox{for all } t \geq 0.
	\end{equation*} 
	Plugging this observation into~\eqref{eq:first_dissipation_inequality}, we find $C$ only depending on $V$ and $W$ such that
	\begin{equation}
	\label{eq:second_dissipation_inequality}
	\int_{n\tau_k}^{(n+1)\tau_k} \hspace{-5pt} \int \frac{| v^k|^2}{2}  \rho^k \D s  \leq \mathcal F(\rho^k_n) - \mathcal F(\rho^k_{n+1}) + C \tau_k.
	\end{equation}
	This is the main estimate needed the proof of compactness of $(\rho^k)$. 
	
	Before giving this proof of compactness, let us provide a few direct consequences of~\eqref{eq:second_dissipation_inequality} that will be useful in the sequel. Following Proposition~\ref{prop:existence}, let us introduce
	\begin{equation*}
	c^k \coloneq  v^k - \frac{\alpha_k}{2} \nabla \log \rho^k \qquad \mbox{and} \qquad  w^k \coloneq  v^k - \alpha_k \nabla \log \rho^k.
	\end{equation*} 
	Identity~\eqref{eq:formula_actions} applies in this context, and plugging it in~\eqref{eq:second_dissipation_inequality} for each of these vector fields, we get
	\begin{align*}
	\frac{1}{2}\int_{n\tau_k}^{(n+1)\tau_k} \hspace{-5pt} \int \left\{| c^k|^2 +  \left| \frac{\alpha_k}{2} \nabla \log \rho^k \right|^2 \right\} \rho^k \D s  &\leq \mathcal F(\rho^k_n) - \mathcal F(\rho^k_{n+1})  +  \frac{\alpha_k}{2}  \left( H(\rho^k_n)-  H(\rho^k_{n+1})\right) + C \tau_k,\\
	\int_{n\tau_k}^{(n+1)\tau_k} \hspace{-5pt} \int \frac{| w^k(s)|^2}{2}  \rho^k(s) \D s  &\leq \mathcal F(\rho^k_n) - \mathcal F(\rho^k_{n+1}) + \alpha_k \left( H(\rho^k_n)-  H(\rho^k_{n+1})\right) + C \tau_k.
	\end{align*}
	Summing such inequalities over values of $n$, we get 
	\begin{gather}
	\label{eq:L2bound_v}\mathcal F( \rho^k(T)) + \int_0^T \hspace{-5pt} \int \frac{|v^k|^2}{2} \rho^k \D t \leq \mathcal F(\rho_0) + C T,\\
	\label{eq:L2bound_c} \mathcal F( \rho^k(T)) + \frac{\alpha_k}{2}H( \rho^k(T)) + \frac{1}{2}\int_0^T \hspace{-5pt} \int \left[| c^k|^2 +  \left| \frac{\alpha_k}{2} \nabla \log \rho^k \right|^2 \right] \rho^k \D t \leq \mathcal F(\rho_0) + \frac{\alpha_k}{2} H(\rho_0) + C T,\\
	\label{eq:L2bound_w}\mathcal F( \rho^k(T)) + \alpha_k H( \rho^k(T))  + \int_0^T \hspace{-5pt} \int \frac{|w^k|^2}{2} \rho^k \D t \leq \mathcal F(\rho_0) + \alpha_k H(\rho_0) + C T.
	\end{gather}
    
	\noindent \underline{Step 2}. $( \rho^k)$ is compact in the uniform topology of $C([0,T]; \P(\T^d))$.

	Recalling the link between the metric derivative of $\rho^k$ and the vector field $c^k$ given by Proposition~\ref{prop:existence}, estimate~\eqref{eq:L2bound_c} provides:
	\begin{equation}
	\label{eq:AC2_bound}
	\frac{1}{2} \int_0^T|\dot{ \rho}^k(s)|^2 \D s  \leq \mathcal F(\rho_0) + \frac{\alpha_k}{2}  H(\rho_0)  + C T.
	\end{equation}
	We conclude using the usual arguments that:
	\begin{itemize}
		\item The sequence $( \rho^k)$ is uniformly bounded in $AC^2([0,T]; \P(\T^d))$, and hence relatively compact in $C([0,T]; \P(\T^d))$.
		\item The distance $D(\bar \rho^k(t),  \rho^k(t)) = D( \rho^k (\lceil t/\tau_k\rceil\tau_k),  \rho^k(t))$ vanishes as $k \to + \infty$, uniformly in~$[0,T]$.
		\item Therefore, $(\bar \rho^k)$ is compact in $L^\infty([0,T]; \P(\T^d))$ and has the same limit points as $( \rho^k)$.
	\end{itemize}
	From now on, up to extraction, we can assume that there is a limiting curve in $C(\R_+; \P(\T^d))$ such that both $\bar \rho^k$ and $ \rho^k$ converge to $\rho$ in $\P(\T^d)$, locally uniformly in time.
	
	\bigskip
	
	\noindent \underline{Step 3}. Distributional convergence, first observations.
	
	Here, we prove that $g(\bar \rho^k)$ belongs to $L^1([0,T] \times \T^d)$ and that the convergence~\eqref{eq:distributional_convergence} holds. 
	
	At Step 4 and 5, we will find that for all $k\in \N$, $n \in \N^*$, $\rho^k_n$ is of class $C^2$ and with values in $(d_-,d_+)$, the interior of $\mathrm{Dom}(f)$. Therefore, we will get for free that $g(\bar \rho^k) \in L^1([0,T] \times \T^d)$, and even that for all $t >0$, in the classical sense, using~\eqref{eq:def_g}:
	\begin{equation}
	\label{eq:link_nablag_nablaf}
	\nabla g (\bar \rho^k(t)) = g'(\bar \rho^k(t)) \nabla \bar \rho^k(t) = \bar \rho^k(t) f''(\bar \rho^k(t)) \nabla \bar \rho^k(t) = \bar \rho^k(t) \nabla f'(\bar \rho^k(t)).
	\end{equation}
	So let us temporary admit this regularity result and gather some information necessary to prove~\eqref{eq:distributional_convergence}.
	
	The very first observation is that $\rho^k$ solves~\eqref{eq:evolution_PDE_rhok}, and as $\bar \rho^k - \rho^k$ converge to $0$ in $L^\infty([0,T]; \P(\T^d))$, this convergence also holds distributionally. Therefore, we can replace $\rho^k$ by $\bar \rho^k$ in the $\partial_t$ and $\Delta$ terms of~\eqref{eq:evolution_PDE_rhok} and deduce
	\begin{equation*}
	\partial_t \bar \rho^k + \Div ( \rho^k v^k) - \frac{\alpha_k}{2} \Delta \bar \rho^k \underset{k \to + \infty}{\longrightarrow} 0.
	\end{equation*}
	Therefore, using~\eqref{eq:link_nablag_nablaf}, we conclude that proving~\eqref{eq:distributional_convergence} reduces to prove that distributionally, 
	\begin{equation}
	\label{eq:convergence_momentum}
	\rho^k v^k + \bar \rho^k  \nabla \Big(  V +  W \ast \bar \rho^k  +  f'(\bar \rho^k) \Big) \underset{k \to + \infty}{\longrightarrow} 0.
	\end{equation} 
	
	This is where the optimality conditions of our optimization problem comes into play. At Step~4 and~5, we will see that this condition writes as follows. Up to considering a left continuous version of $v^k$, for all $n \in \N^*$, we have at time~$n \tau_k$: 
	\begin{equation}
	\label{eq:optimality_condition_rough}
	v^k = -\nabla \Big(  V +  W \ast \bar \rho^k  +  f'(\bar \rho^k) \Big).
	\end{equation}
	Therefore, at that times, convergence~\eqref{eq:convergence_momentum} is actually an equality. 
	
	Moreover, let us call
	\begin{equation}
	\label{eq:def_mk}
	\vec m^k \coloneq  \rho^k v^k.
	\end{equation}
	At Step~6, we will show that for a given $n \in \N$ and $t \in (n\tau_k, (n+1)\tau_k]$, the quantity in~\eqref{eq:convergence_momentum} -- which as a consequence of~\eqref{eq:optimality_condition_rough} is nothing but $\vec m^k(t) - \vec m^k((n+1)\tau^k)$ -- can be controlled using~\eqref{eq:momentum}.

	\bigskip

	\noindent \underline{Step 4}. Optimality conditions.
	
	The purpose of this step is to show that for all $k,n \in \N$:
	\begin{enumerate}
		\item The measure $\rho^k_{n+1}$ has $\Leb$-almost everywhere values in $(d_-, d_+)$, the interior of $\mathrm{Dom}(f)$. 
		\item  If $\varphi$ is a maximizer for the dual problem~\eqref{eq:dual_forward} with $\mu = \rho^k_n$, $\nu = \rho^k_{n+1}$, $\eps = \eps_k$ and $\tau = \tau_k$, the function
		\begin{equation}
		\label{eq:optimality_condition_rhokn}
		\zeta \coloneq  \varphi + V + W \ast \rho^k_{n+1} + f'(\rho^k_{n+1}),
		\end{equation}
		which is well defined almost everywhere thanks to the first point, is constant.
	\end{enumerate} 
	The consequences of these two points will be given in the next step. Our proof relies on the inequality
	\begin{equation}
	\label{eq:naive_optimality}
	\frac{D_{\eps_k}(\rho^k_n , \rho^k_{n+1})^2}{2 \tau_k} + \mathcal F(\rho^k_{n+1}) \leq \frac{D_{\eps_k}(\rho^k_n , \rho)^2}{2 \tau_k} + \mathcal F(\rho)
	\end{equation}
	written for well chosen competitors $\rho \in \P(\T^d)$, and computing derivatives as in Proposition~\ref{prop:right_derivative_sch}.

	Let us first prove that $\Leb$-almost everywhere, $\rho^k_{n+1} \in (d_-, d_+)$. We know that
	\begin{equation*}
	\int f(\rho^k_{n+1} ) < + \infty.
	\end{equation*}
	Therefore, almost everywhere, $\rho^k_{n+1} \in \mathrm{Dom}(f)$. We now need to show that under Assumption~\ref{ass:smooth_f}, $\rho^k_{n+1}$ cannot touch the boundaries of $\mathrm{Dom}(f)$ on a set of positive Lebesgue measure. 
	
	To do so, for $u \in [0,1]$, we define $\rho_u \coloneq  (1-u)\rho^k_{n+1} + u \Leb$. Applying~\eqref{eq:naive_optimality} to $\rho_u$, and using the convexity of $\rho \mapsto \int f(\rho)$, we get for all $u\in[0,1]$
	\begin{equation*}
	\frac{D_{\eps_k}(\rho^k_n , \rho^k_{n+1})^2}{2 \tau_k} + \int\hspace{-4pt} \Big(V + \frac{1}{2}W \ast \rho^k_{n+1}\Big)\rho^k_{n+1} \leq \frac{D_{\eps_k}(\rho^k_n , \rho_u)^2}{2 \tau_k} + \int \hspace{-4pt} \Big(V + \frac{1}{2}W \ast \rho_u\Big)\rho_u + u \int \hspace{-4pt} \Big\{ f(1) - f(\rho^k_{n+1}) \Big\}.
	\end{equation*}
	Rearranging the terms, we find for all $u \in (0,1]$:
	\begin{equation*}
	\int \Big\{ f(\rho^k_{n+1}) -f(1) \Big\} + \int \left( V + W\ast \rho^k_{n+1} \right)\! (\rho^n_{k+1} - 1) + \! \underset{u \to 0}{o}(1)\leq \frac{1}{u}\left( \frac{D_{\eps_k}(\rho^k_n , \rho_u)^2}{2 \tau_k} - \frac{D_{\eps_k}(\rho^k_n , \rho^k_{n+1})^2}{2 \tau_k} \right)\! .
	\end{equation*}
	So the right hand side cannot converge to $- \infty$ as $u \to 0$, and because of Proposition~\ref{prop:right_derivative_sch}, we need to have $\Leb \ll \rho^k_{n+1}$, and hence $\rho^k_{n+1}>0$ almost everywhere. We obtain $\rho^k_{n+1}> d_-$ almost everywhere in the particular case where $d_- = 0$.
	
	Now, we apply once again~\eqref{eq:naive_optimality} to $\rho_u$, but this time using the convexity of $\rho \mapsto D_{\eps_k}(\rho^k_n, \rho)^2$. Rearranging the terms in a similar way as what we just did, we find for all $u \in (0,1]$
	\begin{multline*}
	\frac{D_{\eps_k}(\rho^k_n , \rho^k_{n+1})^2-D_{\eps_k}(\rho^k_n , \Leb)^2}{2 \tau_k}   + \int \left( V + W\ast \rho^k_{n+1} \right) (\rho^n_{k+1} - 1) + \underset{u \to 0}{o}(1)\\ \leq \int \frac{ f((1-u) \rho^k_{n+1} + u) - f(\rho^k_{n+1}) }{u}.
	\end{multline*}
	We conclude by monotone convergence that
	\begin{equation*}
	\int f'(\rho^k_{n+1})(1 - \rho^k_{n+1}) > - \infty,
	\end{equation*}
	and with~\eqref{eq:ass_f_smooth}, we conclude that almost everywhere, $\rho^k_{n+1} > d_-$ also when $d_->0$, and $\rho^k_{n+1}< d_+$, even when $d_+<+\infty$.
	
	Let us prove the second point of our claim. For all $\eta>0$ sufficiently small, let us call
	\begin{equation*}
	A_\eta \coloneq  \Big\{ x \in \T^d \mbox{ such that }\rho^k_{n+1}(x) \in (d_- + \eta, \min(d_+ - \eta, 1/\eta) \Big\},
	\end{equation*}
	and let us call $A$ the increasing limit of $A_\eta$, which is of full Lebesgue measure. On $A$, formula~\eqref{eq:optimality_condition_rhokn} defines $\zeta$ properly. For a given $\eta>0$ sufficiently small to have $\rho^k_{n+1}(A_\eta)>0$, let us call
	\begin{equation*}
	\zeta_\eta \coloneq  \left( \zeta - \frac{1}{\rho^k_{n+1}(A_\eta)} \int_{A_\eta}\zeta \rho^k_{n+1}\right)\1_{A_\eta}.
	\end{equation*}
	It is clearly a bounded function (recall that $\varphi - \alpha_k \log \rho^k_{n+1}$ is smooth in virtue of Proposition~\ref{prop:duality}) with $\int \zeta_\eta \rho^k_{n+1} = 0$. Hence, for $u \in \R$ with $|u|$ sufficiently small, $\rho_u \coloneq  (1 + u \zeta_\eta)\rho^{k}_{n+1} \in \P(\T^d)$, so~\eqref{eq:naive_optimality} applies. Moreover, by Proposition~\ref{prop:right_derivative_sch}, we have
	\begin{equation*}
	\frac{\D }{\D u}\frac{D_\eps(\rho^k_n,\rho_u)^2}{2\tau}\bigg|_{u=0} = \int \varphi \zeta_\eta \rho^k_{n+1}.
	\end{equation*}
	On the other hand, the following derivative is straightforward to compute using the dominated convergence theorem:
	\begin{equation*}
	\frac{\D }{\D u} \mathcal F(\rho_u)\bigg|_{u=0} = \int \left( V + W \ast \rho^k_{n+1} + f'(\rho^k_{n+1}) \right)\zeta_\eta \rho^k_{n+1}.
	\end{equation*}
	Therefore, for all $\eta>0$ small enough, we have
	\begin{equation*}
	\int \left( \varphi + V + W \ast \rho^k_{n+1} + f'(\rho^k_{n+1})  \right) \zeta_\eta \rho^k_{n+1}  = 0.
	\end{equation*}
	This can be re-written as $\int \zeta \zeta_\eta \rho^k_{n+1}  = 0$ and, using the fact that $\zeta_\eta$ has zero mean, we can subtract a constant from $\zeta$. Yet, $\zeta_\eta$ vanishes outside $A_\eta$, and on this set it differs from $\zeta$ by a constant: this means that we have $\int \zeta_\eta^2 \rho^k_{n+1}  = 0$, i.e. $\zeta_\eta=0$. 
	Hence, for all $\eta>0$ sufficiently small, $\zeta$ is constant on $A_\eta$. As $A$ is of full Lebesgue measure on the torus because of the first point of this step, we get the result. From now on, we set
	\begin{equation}
	\label{eq:def_ak}
	\varphi^k_{n+1} \coloneq  - \Big(V + W \ast \rho^k_{n+1} + f'(\rho^k_{n+1}) \Big).
	\end{equation}	
	Such a function differs by a constant from any maximizer $\varphi$ of~\eqref{eq:dual_forward}, therefore, by Proposition~\ref{prop:duality}, choosing $\varphi$ such that $\varphi - \alpha_k \log \rho^k_{n+1}$ is of class $C^\infty$, the same holds for $\varphi^k_{n+1}$.

	\bigskip
	
	\noindent \underline{Step 5}. Consequences of~\eqref{eq:optimality_condition_rhokn}: regularity of $\rho^k_n$, $n\in \N^*$ and identity~\eqref{eq:optimality_condition_rough}.
	
	A first consequence of the optimality condition~\eqref{eq:optimality_condition_rhokn} is that for all $k\in \N,n \in \N^*$, $\rho^k_{n}$ and $\varphi^k_n$ are of class $C^2$ and $\rho^k_n$ takes values in a compact subset of $(d_-,d_+)$. Indeed, we already saw that $\varphi^k_n - \alpha_k \log \rho^k_{n}$ is of class $C^\infty$, and by assumptions, both $V$ and $W \ast \rho^k_{n+1}$ are of class $C^2$. Therefore,
	\begin{equation*}
	f'(\rho^k_{n+1}) + \alpha_k \log \rho^k_{n+1}
	\end{equation*}
	is of class $C^2$. In particular it is bounded, and using \eqref{eq:ass_f_smooth} we obtain that $\rho^k_{n+1}$ stays away from both $d_-$ and $d_+$. Moreover, since $f'$ is $C^2$ and nondecreasing on $(d_-,d_+)$, the function $f' + \alpha_k \log$ is a $C^2$ diffeomorphism of $(d_-,d_+)$ into its image, which allows to invert it. Our claim follows.
	
	Following Proposition~\ref{prop:link_duality_BB}, let us define for all $n \in \N$ and $s \in (0,\tau]$:
	\begin{equation}
	\label{eq:def_phik}
	\varphi^k(n\tau + s) \coloneq  \alpha_k \log \left(  \exp\left(  \frac{ \varphi^k_{n+1}}{\alpha_k}\right) \ast \sigma_{\alpha_k(\tau - s)} \right).
	\end{equation}
	As $\varphi^k_{n+1}$ is of class $C^2$, $\varphi^k$ is $C^1$ with respect to time and $C^2$ with respect to space on all intervals of type $(n\tau_k, (n+1)\tau_k]$. Moreover, up to changing $v^k$ on a negligible set of $\R_+ \times \T^d$, we have everywhere for all $k,n\in\N$ and $t \in (n\tau_k, (n+1 )\tau_k]$:
	\begin{equation*}
	v^k(t) = \nabla \varphi^k(t).
	\end{equation*}
	At time $t = (n+1) \tau_k$, we find~\eqref{eq:optimality_condition_rough}.
	\bigskip
	
	\noindent \underline{Step 6}. Proof of~\eqref{eq:convergence_momentum}.
	
	The last step in order to prove the distributional convergence~\eqref{eq:distributional_convergence} is to show~\eqref{eq:convergence_momentum}. Let us define
	\begin{equation*}
	\vec m^k \coloneq  \rho^k \nabla \varphi^k.
	\end{equation*}
	This is the same definition as in~\eqref{eq:def_mk}. By Step~5, it is easy to see that $\vec m^k$ is continuous in time and space on all intervals of type $(n \tau_k, (n+1)\tau_k]$, and by Corollary~\ref{cor:fluid}, calling 
	\begin{equation}
	\label{eq:def_psik}
	\psi^k \coloneq    \varphi^k - \alpha_k \log  \rho^k,
	\end{equation}
	we have in the classical sense on all intervals of type $(n\tau_k, (n+1)\tau_k)$:
	\begin{equation*}
	\partial_t \vec m^k + \bDiv(\vec m^k \otimes \nabla \psi^k) + \frac{\alpha_k}{2} \Delta\vec m^k = 0.
	\end{equation*}
	Therefore, for all $t >0$, at least in a distributional sense,
	\begin{align*}
	\rho^k(t) v^k(t) + &\bar \rho^k(t)  \nabla \Big(  V +  W \ast \bar \rho^k(t)  +  f'(\bar \rho^k(t)) \Big) = \vec m^k(t) - \vec m^k(\lceil t/\tau_k \rceil \tau_k)=-\int_t^{\lceil t/\tau_k \rceil }\partial_t \vec m^k \\
	&=  \bDiv \left( \int_t^{\lceil t/\tau_k \rceil \tau_k} \rho^k(s)\nabla \varphi^k(s)\otimes \nabla  \psi^k(s) \D s \right) + \frac{\alpha_k}{2}\Delta\left(\int_t^{\lceil t/\tau_k \rceil\tau_k}  \rho^k(s) \nabla  \varphi^k(s) \D s\right).
	\end{align*}
	Therefore, to get the convergence we want, it is enough to prove that both 
	\begin{equation*}
	A(t,x) \coloneq  \int_t^{\lceil t/\tau_k \rceil\tau_k} \! \rho^k(s,x)\nabla \varphi^k(s,x)\otimes \nabla  \psi^k(s,x) \D s \quad \mbox{and} \quad B(t,x) \coloneq  \int_t^{\lceil t/\tau_k \rceil\tau_k} \! \rho^k(s,x) \nabla  \varphi^k(s,x) \D s
	\end{equation*}
	converge to $0$ as $k \to + \infty$ in $L^1([0,T] \times \T^d)$. We only treat the case of $A$, because the case of $B$ is similar but easier. We have 
	\begin{align*}
	\| A  \|_{L^1([0,T] \times \T^d)} &\leq \int_0^T \hspace{-5pt}\int_{\lfloor t/\tau_k \rfloor\tau_k}^{\lceil t/\tau_k \rceil\tau_k} \hspace{-5pt} \int |\nabla \varphi^k(s)| |\nabla  \psi^k(s)| \rho^k(s)\D x \D s\D t\\
	&\leq \tau_k \int_0^T  \hspace{-5pt} \int |\nabla \varphi^k(t)| |\nabla  \psi^k(t)| \rho^k(t)\D x \D t\\
	&\leq  \tau_k \sqrt{\displaystyle \int_0^T  \hspace{-5pt} \int |\nabla  \varphi^k(t)|^2  \rho^k(t)\D x \D t} \sqrt{\displaystyle \int_0^T \hspace{-5pt} \int|\nabla  \psi^k(t)|^2  \rho^k(t) \D x \D t}\\
	&\leq \tau_k \sqrt{\mathcal F(\rho_0) + C T} \sqrt{\mathcal F(\rho_0) + \alpha_k H(\rho_0) + C T} \underset{k \to + \infty}{\longrightarrow} 0.
	\end{align*}
	In the last line, we used the bounds~\eqref{eq:L2bound_v} and~\eqref{eq:L2bound_w}, as well as $w^k = \nabla \psi^k$ which is a consequence of Proposition~\ref{prop:link_duality_BB}.
	
	\bigskip
	
	\noindent \underline{Step 7}. Strong compactness: first observation.
	
	By Step~2, $\bar \rho^k \to \rho$ in the uniform topology of $C([0,T]; \P(\T^d))$. This implies the following convergence in the sense of distributions, easy to justify term by term:
	\begin{equation*}
	\partial_t \bar \rho^k - \Div ( \bar \rho^k (\nabla V + \nabla W \ast \bar \rho^k ))  - \frac{\alpha_k}{2} \Delta \bar \rho^k \underset{k \to + \infty}{\longrightarrow} \partial_t  \rho - \Div (  \rho (\nabla V + \nabla W \ast \rho ))  - \frac{\alpha}{2} \Delta \rho.
	\end{equation*}
	Note that because of~\eqref{eq:L2bound_v}, for all $t \in \R_+$, $\mathcal F(\bar \rho^k(t))$ is bounded uniformly in $k$. Using the lower semi-continuity of $\rho \mapsto \int f(\rho) \D x$ and the bounds on $V$ and $W$ and letting $k \to + \infty$, we deduce that for all $t \in \R_+$, $\rho(t)$ is absolutely continuous with respect to the Lebesgue measure, has values in $[d_-,d_+]$, and hence $g(\rho)$ is well defined almost everywhere, with possibly infinite values, see Remark~\ref{rem:assumption_f_smooth}.
	
	Given~\eqref{eq:distributional_convergence}, in order to justify that $\rho$ solves~\eqref{eq:elliptic_reg_PDE}, we need to prove:
	\begin{enumerate}
		\item The function $g(\rho)$ belongs to $L^1([0,T] \times \T^d)$.
		\item The convergence
		\begin{equation*}
		g(\bar \rho^k) \underset{k \to + \infty}{\longrightarrow} g(\rho)
		\end{equation*}
		holds in the sense of distributions (we will actually show strong convergence in $L^1([0,T] \times \T^d))$.
	\end{enumerate} 
	
	\bigskip
	
	\noindent \underline{Step 8}. Strong compactness: main estimate.
	
	The main tool towards this goal is the following estimate: There exists $C>0$ only depending on $V$ and $W$ such that for all $k \in \N$,
	\begin{equation}
	\label{eq:main_compactness_estimate}
	\int_0^T \hspace{-5pt}\int \left\{ |\nabla h(\bar \rho^k) |^2 + \alpha_k^2 |\nabla  \sqrt{\bar \rho^k} |^2 \right\} \D x \D t \leq C \Big( \mathcal F(\rho_0) + \alpha_k H(\rho_0) + T\Big),
	\end{equation}
	where for all $s \in (d_-, d_+)$, 
	\begin{equation}
	\label{eq:def_h}
	h(s) \coloneq  \int_1^s \sqrt r f''(r) \D r.
	\end{equation}
	(Recall that by Assumption~\ref{ass:smooth_f}, we have $d_- < 1 < d_+$, and that by Step~4 and the regularity deduced at Step~5, $\bar \rho^k$ only takes values in $(d_-, d_+)$.)

	To prove this estimate, the starting point is~\eqref{eq:L2bound_w}: what we will do is to bound from below the left-hand side of~\eqref{eq:L2bound_w}, using the decrease of the backward kinetic energy deduced from Corollary~\ref{cor:fluid}. Indeed, thanks to~\eqref{eq:PDE_kinetic} and the regularity proved at Step~5, calling $\cev{e}^k \coloneq  \frac{1}{2} \rho^k|\nabla \psi^k|^2 = \frac{1}{2} \rho^k|w^k|^2 $, we see that for all $n \in \N$,
	\begin{equation*}
	\tau_k \int \frac{|\nabla \psi^k((n+1)\tau_k)|^2}{2}\rho^k((n+1)\tau_k)\leq \int_{n \tau_k}^{(n+1)\tau_k}\hspace{-5pt}\int \frac{|w^k|^2}{2} \rho^k \D t . 
	\end{equation*}
	On the other hand, we know by~\eqref{eq:def_ak},~\eqref{eq:def_phik} and~\eqref{eq:def_psik} that
	\begin{equation*}
	\psi^k((n+1)\tau_k) =  - \Big(V + W \ast \rho^k_{n+1} + f'(\rho^k_{n+1}) + \alpha_k \log \rho^k_{n+1} \Big)+C.
	\end{equation*}
	Plugging this identity in the previous estimate, we find
	\begin{equation*}
	\tau_k \int \frac{\Big|\nabla \Big(V + W \ast \rho^k_{n+1} + f'(\rho^k_{n+1}) + \alpha_k \log \rho^k_{n+1} \Big)\Big|^2}{2}\rho^k_{n+1}\leq \int_{n \tau_k}^{(n+1)\tau_k}\hspace{-5pt}\int \frac{|w^k|^2}{2} \rho^k \D t, 
	\end{equation*}
	and finally, because $\bar \rho^k$ is constant on the time step intervals
	\begin{equation*}
	\int_{n \tau_k}^{(n+1)\tau_k}\hspace{-5pt}	\int \frac{\Big|\nabla \Big(V + W \ast \bar\rho^k + f'(\bar\rho^k) + \alpha_k \log \bar\rho^k \Big)\Big|^2}{2}\bar\rho^k \D t\leq \int_{n \tau_k}^{(n+1)\tau_k}\hspace{-5pt}\int \frac{|w^k|^2}{2} \rho^k \D t, 
	\end{equation*}
	Summing this inequality over $n$ and using~\eqref{eq:L2bound_w}, we get:
	\begin{equation*}
	\int_{0}^{T}\hspace{-5pt}	\int \frac{\Big|\nabla \Big(V + W \ast \bar\rho^k + f'(\bar\rho^k) + \alpha_k \log \bar\rho^k \Big)\Big|^2}{2}\bar\rho^k \D t \leq \mathcal F(\rho_0) + \alpha_k H(\rho_0) + CT.
	\end{equation*}
	Now, using the inequality $ |a|^2 \leq 2|a+b+c|^2 +4 (|b|^2 + |c|^2 )$, we find
	\begin{multline*}
	\int_{0}^{T}\hspace{-5pt}\int\Big|\nabla \Big(f'(\bar\rho^k) + \alpha_k \log \bar\rho^k\Big) \Big|^2 \bar \rho^k \D t \\\leq 4\Big( \mathcal F(\rho_0) + \alpha_k H(\rho_0) + CT \Big) + 4 \int_{0}^{T}\hspace{-5pt}\int \Big\{ |\nabla V|^2 + |\nabla W \ast \bar \rho^k|^2  \Big\}\bar \rho^k \D t.
	\end{multline*}
	To treat the left-hand side, observe that by the regularity obtained at Step~5 and the definition~\eqref{eq:def_h} of $h$, we have 
	\begin{align*}
	|\nabla (f'(\bar \rho^k) + \alpha_k \log \bar \rho^k)|^2 \bar \rho^k &= \left|\left(\sqrt{\bar \rho^k} f''(\bar \rho^k ) + \frac{\alpha_k}{\sqrt{\bar \rho^k}}\right)\nabla \bar \rho^k\right|^2\\
	&= \left(\sqrt{\bar \rho^k} f''(\bar \rho^k ) + \frac{\alpha_k}{\sqrt{\bar \rho^k}}\right)^2 |\nabla \bar \rho^k|^2 \\	
	&\geq \left(\sqrt{\bar \rho^k} f''(\bar \rho^k )\right)^2 |\nabla \bar \rho^k|^2 + \left( \frac{\alpha_k}{\sqrt{\bar \rho^k}}\right)^2 |\nabla \bar \rho^k|^2\\
	&= \left| h'(\bar \rho^k) \nabla \bar \rho^k \right|^2 + \left|  \frac{\alpha_k \nabla \bar \rho^k}{\sqrt{\bar \rho^k}} \right|^2\\	
	& = |\nabla h(\bar \rho^k)|^2 + 2 \alpha_k^2 |\nabla \sqrt{\bar \rho^k}|^2,
	\end{align*}
	where the inequality follows from the convexity of $f$. For the right-hand side, we get our estimate by using uniform bounds for $\nabla V$ and $\nabla W$.
	
	\bigskip
	
	\noindent	\underline{Step 9}. A technical lemma of the Poincar'e type.
	
	Estimate~\eqref{eq:main_compactness_estimate} gives a control of $\nabla h(\bar \rho^k)$ in $L^2([0,T] \times \T^d)$. A crucial point in the coming steps is to deduce a control for $h(\bar \rho^k)$ in some Lebesgue space. We will use the following lemma.
	\begin{Lem}
		\label{lem:poincaré}
		\begin{enumerate}
			\item Let $\rho \in \mathcal P(\T^d) \cap L^1(\T^d)$ with $h(\rho) \in H^1(\T^d)$. Let $p \coloneq  2d/(d-2)$. There exists~$C$ only depending on $d$ and $h$ such that:
			\begin{equation*}
			\| h(\rho)_+ \|_{L^p(\T^d)} \leq C(1 + \| \nabla h(\rho) \|_{L^2(\T^d)}).
			\end{equation*}
			
			\item Let $\mathscr A \subset L^1(\T^d)\cap \P(\T^d)$ be a family of functions such that there exists $s_- \in (d_-, 1)$ and $\delta_- \in (0,1)$ such that for all $\rho \in \mathscr A$
			\begin{equation*}
			\Leb(\rho \geq s_-) \geq \delta_-.
			\end{equation*}
			Then, there is $C$ only depending on $d$, $h$, $s_-$ and $\delta_-$ such that for all $\rho \in \mathscr A$ with $h(\rho) \in H^1(\T^d)$:
			\begin{equation*}
			\| h(\rho)_- \|_{L^p(\T^d)} \leq C(1 + \| \nabla h(\rho) \|_{L^2(\T^d)}).
			\end{equation*}
			
			\item Finally, any uniformly equi-integrable family $\mathscr A \subset L^1(\T^d) \cap \P(\T^d)$ satisfies the condition of the second point.
		\end{enumerate}
	\end{Lem}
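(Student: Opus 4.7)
All three parts hinge on the Sobolev embedding $H^1(\T^d)\hookrightarrow L^p(\T^d)$ on the torus, combined with the following truncated Sobolev--Poincar\'e principle, which I would establish first: if $v\in H^1(\T^d)$ vanishes on a measurable set $E$ with $\Leb(E)\geq\delta>0$, then
$$\|v\|_{L^p(\T^d)}\leq C(d,\delta)\|\nabla v\|_{L^2(\T^d)}.$$
The derivation is short: from the Poincar\'e--Wirtinger inequality $\|u-\bar u\|_{L^p}\leq C_d\|\nabla u\|_{L^2}$ on the torus, applied to $v$, bound $|\bar v|=\big|\int_{E^c}v\big|\leq\Leb(E^c)^{(p-1)/p}\|v\|_{L^p}\leq(1-\delta)^{(p-1)/p}\|v\|_{L^p}$ by H\"older, and absorb the resulting term on the left. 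With this lemma in hand, parts (1) and (2) both reduce to exhibiting a set of quantitative positive measure on which $h(\rho)_{\pm}$ is bounded above by a constant depending only on $h$.

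\textbf{Part (1).} Markov's inequality applied to $\int\rho=1$ gives $\Leb(\{\rho>2\})\leq 1/2$, so $E\coloneq\{\rho\leq 2\}$ has measure at least $1/2$. Since $h$ is nondecreasing with $h(1)=0$, on $E$ we have $h(\rho)_+\leq h(2)$. Setting $v\coloneq(h(\rho)_+-h(2))_+\in H^1(\T^d)$, we see $v=0$ on $E$ and $|\nabla v|\leq|\nabla h(\rho)|$ a.e., so the truncated Sobolev--Poincar\'e inequality with $\delta=1/2$ yields $\|v\|_{L^p}\leq C\|\nabla h(\rho)\|_{L^2}$. The pointwise inequality $h(\rho)_+\leq h(2)+v$ together with $\Leb(\T^d)=1$ then gives the claim.

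\textbf{Part (2).} The set $E\coloneq\{\rho\geq s_-\}$ has measure at least $\delta_-$ by hypothesis. Since $s_->d_-$, $h(s_-)$ is finite, and since $s_-<1$, $h(s_-)\leq 0$. By monotonicity of $h$, on $E$ we have $h(\rho)\geq h(s_-)$, hence $h(\rho)_-\leq|h(s_-)|$. The same truncation argument applied to $v\coloneq(h(\rho)_--|h(s_-)|)_+$ with the truncated Sobolev--Poincar\'e inequality at parameter $\delta_-$ closes the estimate.

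\textbf{Part (3).} Since $d_-<1$, pick any $s_-\in(d_-,1)$. Then
$$\int_{\{\rho\geq s_-\}}\rho\D x=1-\int_{\{\rho<s_-\}}\rho\D x\geq 1-s_-.$$
Equi-integrability applied with threshold $\eta\coloneq(1-s_-)/2>0$ produces $\delta_->0$ such that $\Leb(A)<\delta_-$ implies $\int_A\rho\D x\leq\eta$ for every $\rho\in\mathscr A$. Since $\int_{\{\rho\geq s_-\}}\rho\geq 2\eta>\eta$, the contrapositive gives $\Leb(\{\rho\geq s_-\})\geq\delta_-$, as required.

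\textbf{Main difficulty.} The only non-trivial ingredient is the truncated Sobolev--Poincar\'e inequality with an explicit dependence of the constant on $\delta$; once that is in hand, parts (1) and (2) differ only in the choice of reference level and of the set of positive measure where the one-sided bound holds, and part (3) reduces to a one-line consequence of Markov's inequality and the definition of equi-integrability.
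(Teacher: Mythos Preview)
Your proof is correct and follows essentially the same approach as the paper: both rest on a Poincar\'e-type inequality for $H^1$ functions vanishing on a set of measure at least $\delta$ (the paper quotes the $L^2$ version from Evans and then invokes Sobolev embedding, while you go directly to $L^p$ by absorbing the mean, but this is a cosmetic difference), and both reduce parts (1) and (2) to finding a level $s_\pm$ at which the one-sided truncation $(h(\rho)-h(s_\pm))_\pm$ vanishes on a set of definite measure.

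There is one small slip in part (1): you take the threshold $s_+=2$, but $h$ is only defined on $(d_-,d_+)$ and Assumption~\ref{ass:smooth_f} only guarantees $d_+>1$, so $h(2)$ may be undefined (or infinite). The paper avoids this by choosing an arbitrary $s_+\in(1,d_+)$ and setting $\delta_+=1-1/s_+$; your argument goes through verbatim with this replacement. Parts (2) and (3) are fine as written and match the paper's proof almost line for line.
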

	\begin{proof}[Proof of Lemma~\ref{lem:poincaré}]
		Of course, by Sobolev embeddings, it is enough to estimate the $L^2$ norms instead of  the $L^p$ norms of $(h(\rho))_\pm$. The main tool is a small adaptation of~\cite[Exercice~15, Section~5.10]{evans2010partial}: for all $\delta \in (0,1)$, there exists a constant $C = C(\delta) >0$ such that for all $f \in H^1(\T^d)$, if $\Leb(f= 0) \geq \delta$, we have
		\begin{equation}
		\label{eq:Poincare_evans}
		\| f \|_{L^2(\T^d)} \leq C(\delta) \| \nabla f \|_{L^2(\T^d)}.
		\end{equation}
		
		An important observation for the proof of this lemma is that $s \mapsto h(s)$ is nonnegative when $s \geq 1$ and nonpositive when $s \leq 1$.
		
		Let us consider $\rho \in \P(\T^d) \cap L^1(\T^d)$ with $h(\rho) \in H^1(\T^d)$ and prove the first point of the lemma. Let $s_+ \in (1, d_+)$ and $\delta_+ \coloneq  1 - 1/s_+$. We have
		\begin{equation*}
		\Leb( \rho \geq s_+ )   \leq \frac{1}{s_+},
		\end{equation*}
		so that
		\begin{equation*}
		\Leb(h(\rho) \leq h(s_+)) \geq \delta_+.
		\end{equation*}
		We conclude with~\eqref{eq:Poincare_evans} that
		\begin{equation*}
		\| (h(\rho) - h(s_+))_+ \|_{L^2(\T^d)} \leq C(\delta_+) \| \nabla h(\rho)\|_{L^2(\T^d)},
		\end{equation*}
		and hence that
		\begin{equation*}
		\| h(\rho)_+ \|_{L^2} \leq h(s_+) + C(\delta_+) \| \nabla h(\rho)\|_{L^2(\T^d)}.
		\end{equation*}
		
		The second point follows the same lines. Let us assume in addition that $\rho \in \mathscr A$ and let us choose $s_- \in (d_-, 1)$ and $\delta_- \in (0,1)$ as in the statement of the lemma. We find that $\Leb(\rho \geq s_-) \geq \delta_-$. We conclude as before that
		\begin{equation*}
		\| h(\rho)_- \|_{L^2} \leq | h(s_-) | + C(\delta_-) \| \nabla h(\rho) \|_{L^2(\T^d)},
		\end{equation*}
		and the conclusion follows.
		
		Let us finally prove the third point. Let $\mathscr A \subset L^1(\T^d)\cap \P(\T^d)$ be a uniformly equi-integrable set. Let $s_- \in (d_-, 1)$. Let $\delta_- = \delta_-(\mathscr A)>0$ be such that for all $\rho \in \mathscr A$, for all Borelian set $A$, if $\Leb(A) < \delta_-$ then, 
		\begin{equation*}
		\int_A \rho < 1 - s_-.
		\end{equation*}
		For all $\rho \in \mathscr A$,
		\begin{equation*}
		1 = \int \rho \leq s_-  + \int_{\{\rho \geq s_-\}} \rho.
		\end{equation*}
		So
		\begin{equation*}
		\int_{\{\rho \geq s_-\}} \rho \geq 1 - s_-,
		\end{equation*}
		and hence $\Leb(\rho \geq s_-) \geq \delta_-$ by definition of $\delta_-$.
	\end{proof}

	\bigskip
	
	\noindent \underline{Step 10}.  Strong compactness of $(\bar \rho^k)$ in $L^1([0,T] \times \T^d)$.
	
	The purpose of this step is to use the estimate~\eqref{eq:main_compactness_estimate} to show that:
	\begin{equation}
	\label{eq:strong_L1_rho}
	\bar \rho^k \underset{k \to + \infty}{\longrightarrow} \rho \mbox{ in the strong topology of } L^1([0,T] \times \T^d).
	\end{equation}
	We will use extensively the results of~\cite{rossi2003tightness}. The idea is to apply~\cite[Theorem~2]{rossi2003tightness} to show that $(\bar \rho^k)$ is precompact in the topology of convergence in measure of $\mathcal M(0,T; L^1(\T^d))$ (see~\cite[Section~1.1]{rossi2003tightness}). As $(\bar \rho^k)$ is clearly uniformly equi-integrable in $L^1(0,T; L^1(\T^d))$ (simply because for all $k \in \N$ and all $t\in \R_+$, $\| \bar \rho^k(t) \|_{L^1(\T^d)} = 1$), by \cite[Proposition~1.10]{rossi2003tightness}, our claim~\eqref{eq:strong_L1_rho} will follow.
	
	We define a functional $\mathfrak F$ which can take two different forms according to $\alpha>0$ or $\alpha =0$. In the case $\alpha>0$, for all $\mu \in L^1(\T^d)$, let us set
	\begin{equation*}
	\mathfrak F(\mu) \coloneq  \left\{  
	\begin{aligned}
	& \int  |\nabla \sqrt \mu|^2 \D x && \mbox{if } \mu \in \P(\T^d), \, \mbox{ and }\sqrt \mu \in H^1(\T^d),\\
	& + \infty &&\mbox{otherwise,}
	\end{aligned}
	\right.
	\end{equation*}
	while in the case $\alpha=0$ we set
	\begin{equation*}
	\mathfrak F(\mu) \coloneq  \left\{  
	\begin{aligned}
	& \int |\nabla h(\mu)|^2 \D x &&\mbox{if } \mu \in \P(\T^d), \,   \mbox{ and } h(\mu) \in H^1(\T^d),\\
	& + \infty &&\mbox{otherwise}.
	\end{aligned}
	\right.
	\end{equation*}

	We will apply~\cite[Theorem~2]{rossi2003tightness} with this $\mathfrak F$ and $g = D^2$. Let us check the assumptions of the theorem with these choices. For what concerns the estimates, we have on the one hand, in virtue of~\eqref{eq:main_compactness_estimate}:
	\begin{equation*}
	\sup_k \int_0^T \mathfrak F (\bar \rho^k_t) \D t < + \infty.
	\end{equation*}
	We look at bounds in time. The estimate~\eqref{eq:AC2_bound} implies that all the curves $t\mapsto \rho_k(t)$ are H\"older continuous with a constant independent of $k$, more precisely $D^2(\rho_k(t),\rho_k(s))\leq K|t-s|$. Moreover, the curves $t\mapsto \bar\rho_k(t)$ are piecewise constants on intervals of length $\tau_k$ and coincide with the previous curves at the end of these intervals. Hence we obtain 
	\begin{equation*}
	D^2(\bar \rho_k(s), \bar \rho_k(t)) = D^2(\rho_k(\lceil s/\tau_k\rceil \tau_k), \rho_k(\lceil t/\tau_k\rceil \tau_k)) \leq K \Big( \lceil t/\tau_k\rceil - \lceil s/\tau_k\rceil \Big) \tau_k.
	\end{equation*}
	This allows to obtain easily the condition 
	\begin{equation}
	\label{eq:second_estim_savaré_rossi}
	\lim_{\tau \to 0} \sup_k \int_0^{T-\tau} D^2(\bar\rho_k(t), \bar \rho_k(t+\tau))\D t = 0,
	\end{equation}
	since we have 
	\begin{equation*}
	\int_0^{T-\tau} D^2(\bar \rho_k(t), \bar \rho_k(t+\tau))\D t \leq K \tau_k \int_0^{T} \Big( \lceil (t+\tau)/\tau_k\rceil - \lceil t/\tau_k\rceil \Big) \D t
	\end{equation*}
	and the integral in the right hand side equals $\tau T/\tau_k$ if $\tau < \tau_k$ and is bounded by $ T \lceil \tau / \tau_k\rceil$ otherwise. In both case, the right hand side in the equation above is bounded by $2 K(T + \sup_k \tau_k) \tau$ and~\eqref{eq:second_estim_savaré_rossi} follows.
	
	Therefore, in order to meet the assumptions of~\cite[Theorem~2]{rossi2003tightness} and hence to conclude~\eqref{eq:strong_L1_rho}, we just need to show that $\mathfrak F$ has compact sublevels in $L^1(\T^d)$ and that $D^2$ is lower semi-continuous in $L^1(\T^d) \times L^1(\T^d)$ (the compatibility condition~(1.21c) from~\cite{rossi2003tightness} is obvious here). 
	
	Concerning $D^2$, this is obvious, since it is well known that the Wasserstein distance metrizes the weak convergence of measure, which is weaker than the strong $L^1$ topology.

	Concerning $\mathfrak F$, we show that for a given $M>0$, the set
	\begin{equation*}
	\mathscr K_M \coloneq  \left\{  \mu \in L^1(\T^d) \mbox{ such that } \mathfrak F(\mu) \leq M \right\}
	\end{equation*}
	is a compact set of $L^1(\T^d)$. 
	
	\bigskip
	
	When $\alpha>0$, let $(\mu_n)_{n \in \N}$ be a sequence in $\mathscr K_M$ and for all $n$, $a_n \coloneq  \sqrt{\mu_n}$. We have $\int a_n^2 \D x = 1$ and $\int |\nabla a_n|^2 \D x \leq M$. Therefore, $(a_n)_{n \in N}$ is bounded in $H^1(\T^d)$, hence precompact in $L^2$, and therefore $(\mu_n)_{n \in \N}$ is precompact in $L^1(\T^d)$. Using classical semi-continuity arguments, we see that it limit points are in $\mathscr K_M$ and the conclusion follows.
	
	\bigskip
	
	When $\alpha=0$, this is slightly more difficult and we have to use condition~\eqref{eq:growth_condition} as well as Lemma~\ref{lem:poincaré}. We consider once again $(\mu_n)_{n \in \N}$ a sequence in~$\mathscr K_M$ and we show that it has a limit point in $\mathscr K_M$ in the strong topology of $L^1$. For all $n$, let $a_n \coloneq  h(\rho_n)$. 
	
	We first prove that $(a_n)$ is bounded in $L^p(\T^d)$ with $p = 2d/(d-2)$. By the first point of Lemma~\ref{lem:poincaré}, $((a_n)_+)$ is bounded in $L^p(\T^d)$. Then, we deduce that $\mathscr K_M$ is uniformly equi-integrable. Indeed, whenever $K > 1$, using~\eqref{eq:growth_condition},
	\begin{equation*}
	\int_{\mu_n\geq K} \mu_n\D x = \int_{\mu_n \geq K} \frac{\mu_n}{h(\mu_n)^p} h(\mu_n)^p \D x \leq \| (a_n)_+ \|_{L^p(\T^d)}^p\sup_{s \geq K} \frac{s}{h(s)^p} \underset{K \to +\infty}{\longrightarrow} 0.
	\end{equation*}
	Therefore, the second and third point of the lemma apply with $\mathscr A = \mathscr K_M$, so we find that $(a_n)$ is bounded in $L^p(\T^d)$ and hence in $H^1(\T^d)$.
	
	Thus, up to considering a subsequence, we have $ h(\mu_n) \to  h(\mu)$ in $L^2(\T^d)$, and so this convergence also holds in measure. But $h^{-1}: \R \to \R_+$ is continuous, so $\mu_n \to \mu$ in measure as well. As in addition, as we already saw, $\mathscr K_M$ is uniformly equi-integrable, $\mu_n \to \mu$ in $L^1(\T^d)$. Once again, the fact that the limit points belong to $\mathscr K_M$ follow from standard semi-continuity arguments.
	
	\bigskip	
	
	\noindent \underline{Step 11}. Strong compactness of $g(\bar \rho^k)$ in $L^1([0,T] \times \T^d)$. 
	
	We can finally conclude the proof by showing that
	\begin{equation*}
	g(\bar \rho^k) \underset{k \to + \infty}{\longrightarrow} g(\rho) \quad \mbox{in} \quad L^1([0,T] \times \T^d).
	\end{equation*}
	By the previous step, $\bar \rho^k \to \rho$ in the strong topology of $L^1([0,T] \times \T^d)$. On the other hand, $g$ is continuous. Therefore, if we can show that $(g(\bar \rho^k))$ is uniformly equi-integrable on $[0,T] \times \T^d$, then we can conclude that $g(\bar \rho^k) \to g(\rho)$ in $L^1([0,T] \times \T^d)$.

	This uniform equi-integrability is obtained showing that both the positive and negative parts of $(g(\bar \rho^k) - g(1))$ are uniformly integrable. Observe that $s \mapsto g(s) - g(1)$ is nonnegative when $s \geq 1$ and nonpositive when $s \leq 1$. First, we have for all $s \geq 1$:
	\begin{equation*}
	g(s) = g(1) + \int_1^s \sqrt r h'(r) \D r \leq g(1) + \sqrt s h(s).
	\end{equation*}
	We deduce that
	\begin{equation*}
	(g(\bar \rho^k) - g(1))_+ \leq |g(1)| +  \sqrt{\bar \rho^k} h(\bar \rho^k)_+.
	\end{equation*}
	But $(\sqrt{\bar \rho^k})$ is bounded in $L^\infty(\R_+; L^2(\T^d))$ and by~\eqref{eq:main_compactness_estimate} and the first point of Lemma~\ref{lem:poincaré}, $(h(\bar \rho^k)_+)$ is bounded in $L^2([0,T];L^p(\R^d))$ for $p = 2d/(d-2)$. So $(g(\bar \rho^k)_+)$ is bounded in $L^2([0,T];L^{p'}(\T^d))$ for some $p' >1$ and uniform equi-integrability for the positive part of $(g(\bar \rho^k))$ on $[0,T] \times \T^d$ follows.
	
	It remains to prove uniform equi-integrability of the negative part of $(g(\bar \rho^k))$. This is where we need to use the second point of Assumption~\ref{ass:strong_compactness}. The easiest case is when $d_- = 0$ and $f(0) < + \infty$. Indeed, in this case, by convexity of $f$, for all $s \in [0,1]$,
	\begin{equation*}
	-g(s) = f(s) - sf'(s) \leq f(0),
	\end{equation*}
	so that for all $k$
	\begin{equation*}
	(g(\bar \rho^k) - g(1))_- \leq |g(1)| + |f(0)|.
	\end{equation*}
	Therefore, $((g(\bar \rho^k) - g(1))_-)$ is bounded in $L^\infty(\R_+ \times \T^d)$ and uniform equi-integrability follows.
	
	In all the other cases, we will use the following inequality: if $s<1$, 
	\begin{equation*}
	g(1) - g(s) =  \int_s^1 \sqrt{r} h'(r) \D r \leq h(s)_-.
	\end{equation*}
	So we need to prove uniform equi-integrability for $(h(\bar \rho^k)_-)$. 
	
	We actually prove that $(h(\bar \rho^k)_-)$ is bounded in $L^2([0,T]; L^2(\T^d))$ using~\eqref{eq:L2bound_c} and the second and third points of Lemma~\ref{lem:poincaré} with a well chosen $\mathscr A$. First, let us fix $K$ a large constant bounding from above the right-hand side of~\eqref{eq:L2bound_c} uniformly in~$k$. We call 
	\begin{equation*}
	\mathscr A \coloneq  \left\{  
	\begin{aligned}
	&\left\{ \rho \in L^1(\T^d) \cap \P(\T^d) \mbox{ such that } H(\rho) \leq 2 K/ (\inf_k \alpha_k)  \right\}, &&\mbox{if } \alpha >0, \\
	&\left\{ \rho \in L^1(\T^d) \cap \P(\T^d) \mbox{ such that } \int f(\rho) \leq K + \|V\|_\infty + \|W \|_\infty \right\}, &&\mbox{if }\alpha =0.
	\end{aligned}
	\right.
	\end{equation*}
	If $\alpha >0$ or if $\alpha = 0$ but $f$ is superlinear, $\mathscr A$ is uniformly equi-integrable and hence, according to the third point of Lemma~\ref{lem:poincaré}, $\mathscr A$ satisfies the second point of Lemma~\ref{lem:poincaré}. If $\alpha = 0$ but $f(d_-) = +\infty$, let us call $\tilde f : s \mapsto f(s) - f(1) - (s-1)f'(1) \in \R_+$. Then for all $s_- \in (d_-, 1)$ and all $\rho \in \mathscr A$,
	\begin{equation*}
	\Leb(\rho < s_-) \tilde f(s_-) \leq \int \tilde f(\rho) = \int f(\rho) - f(1) \leq K + \|V\|_{\infty} + \| W \|_\infty + |f(1)|.
	\end{equation*}
	Therefore, for $s_-$ sufficiently close to $d_-$, $\Leb(\rho < s_-) \leq 1/2$ uniformly in $\rho \in \mathscr A$. In that case as well, $\mathscr A$ satisfies the condition of the second point of Lemma~\ref{lem:poincaré}. 
	
	We conclude that in all these cases, there exists $C>0$ such that for all $\rho \in \mathscr A$ such that $h(\rho) \in H^1(\T^d)$,
	\begin{equation*}
	\| h(\rho)_- \|_{L^2(\T^d)} \leq C(1 + \| \nabla h(\rho) \|_{L^2(\T^d)}).
	\end{equation*}
	Using~\eqref{eq:L2bound_c} and~\eqref{eq:main_compactness_estimate}, we find that $(\nabla h(\bar \rho^k))$ is bounded in $L^2([0,T]; L^2(\T^d))$, and that for all $k$ and almost all $t \in [0,T]$, $\bar \rho^k(t) \in \mathscr A$. Therefore, by the previous inequality, $(h(\bar \rho^k)_-)$ is bounded in $L^2([0,T]; L^2(\T^d))$. This last observation concludes the proof of Theorem~\ref{thm:main}.\qed
	\bigskip
	
\noindent {\bf Acknowledgments.} The authors acknowledge the support of European union via the ERC AdG 101054420 EYAWKAJKOS. 	
	\bibliography{biblio}
	\bibliographystyle{plain}

\end{document}